\documentclass[11pt]{article}
\usepackage{amsfonts,latexsym,rawfonts,amsmath,amssymb,amsthm}
\usepackage{lscape}
\usepackage[cm]{fullpage}
\usepackage{amscd, float,times,rotating}
\usepackage{pb-diagram}
\usepackage{hyperref}
\usepackage{amsthm}
\usepackage{enumerate}
\usepackage[latin1]{inputenc}
\usepackage[T1]{fontenc}
\usepackage{mathrsfs}
\usepackage{amscd}
\usepackage{amssymb}
\usepackage[english]{babel}
\usepackage{graphicx}

\addtolength{\oddsidemargin}{1cm}
\addtolength{\evensidemargin}{1.5cm}
\addtolength{\textwidth}{-2.5cm} \addtolength{\topmargin}{1cm}
\addtolength{\textheight}{-1cm}

\usepackage{geometry}
\geometry{left=3cm,right=3cm,top=3cm,bottom=3cm}

\newtheorem{theorem}{Theorem}[section]
\newtheorem{lemma}[theorem]{Lemma}
\newtheorem{corollary}[theorem]{Corollary}

\theoremstyle{definition}
\newtheorem{definition}[theorem]{Definition}

\newtheorem{remark}[theorem]{Remark}

\numberwithin{equation}{section}



\title{The complex Monge-Amp\`{e}re equation on some compact Hermitian manifolds}

\author{Jianchun Chu}

\begin{document}

\date{}

\maketitle

\begin{abstract}
We consider the complex Monge-Amp\`{e}re equation on compact
manifolds when the background metric is a Hermitian metric (in
complex dimension two) or a kind of Hermitian metric (in higher
dimensions). We prove that the Laplacian estimate holds when $F$ is
in $W^{1,q_{0}}$ for any $q_{0}>2n$. As an application, we show
that, up to scaling, there exists a unique classical solution in
$W^{3,q_{0}}$ for the complex Monge-Amp\`{e}re equation when $F$ is
in $W^{1,q_{0}}$.
\end{abstract}

\section{Introduction}
We consider the regularity problem of the complex Monge-Amp\`{e}re
equation on some compact Hermitian manifolds. Let $(M,g)$ be a
compact Hermitian manifold of complex dimension $n\geq 2$. For a
real-valued function $F$ on $M$, we consider the Monge-Amp\`{e}re
equation
$$\det(g_{i\bar{j}}+\phi_{i\bar{j}})=e^{F}\det(g_{i\bar{j}}),$$
with $(g_{i\bar{j}}+\phi_{i\bar{j}})>0$, for a real-valued function
$\phi$ such that $\sup_{M}\phi=-1$. We write
$$\omega=\sqrt{-1}g_{i\bar{j}}dz^{i}\wedge
d\bar{z}^{j}$$ and
$$\tilde{\omega}=\sqrt{-1}\tilde{g}_{i\bar{j}}dz^{i}\wedge
d\bar{z}^{j},$$ where
$\tilde{g}_{i\bar{j}}=g_{i\bar{j}}+\phi_{i\bar{j}}$. Thus, the
Monge-Amp\`{e}re equation can be written as

\begin{equation}\label{CMAE}
\left\{ \begin{array}{ll}
\ \tilde{\omega}^{n}=e^{F}\omega^{n}\\
\ \tilde{\omega}=\omega+\sqrt{-1}\partial\bar{\partial}\phi>0\\
\ \sup_{M}\phi=-1
\end{array}.\right.
\end{equation}
We shall use the following notations, for a function $f$ and a
holomorphic coordinate $z=(z^{1},\ldots,z^{n})$,
$$f_{i\bar{j}}=\frac{\partial^{2}f}{\partial z^{i}\partial \bar{z}^{j}},~~\triangle f=g^{i\bar{j}}f_{i\bar{j}},~~\tilde{\triangle} f=\tilde{g}^{i\bar{j}}f_{i\bar{j}},$$
$$|\nabla f|^{2}=g^{i\bar{j}}f_{i}f_{\bar{j}},~~|\tilde{\nabla} f|^{2}=\tilde{g}^{i\bar{j}}f_{i}f_{\bar{j}}.$$
What is more, we use $\|f\|_{L^{p}(M,\omega)}$ and
$\|\nabla^{m}f\|_{L^{p}(M,\omega)}$ to denote the corresponding
norms with respect to $(M,\omega)$.

When $\omega$ is K\"{a}hler, the complex Monge-Amp\`{e}re equation
is very important. In the 1950s, Calabi \cite{Ca} presented his
famous conjecture and transformed that problem into (\ref{CMAE}). In
\cite{Ya}, Yau proved the existence of the classical solution of
(\ref{CMAE}) by using the continuity method and solved the Calabi's
conjecture.

The Dirichlet problem for the complex Monge-Amp\`{e}re equation is
also very important. On one hand, Bedford-Taylor \cite{BT76, BT82}
studied the weak solution. After their work, weak solution of the
complex Monge-Amp\`{e}re equation has been studied extensively.
There are many existence, uniqueness and regularity results of the
complex Monge-Amp\`{e}re equation under different conditions and we
refer the reader to \cite{Blo05, Di, DP, EGZ, GZ, Ko98, Ko08, Zh}.

On the other hand, the classical solvability of the Dirichlet
problem was established by Caffarelli-Kohn-Nirenberg-Spruck
\cite{CKNS} for strongly pseudoconvex domains in $\mathbb{C}^{n}$.
The reader can also see the work of Krylov \cite{Kry89, Kry94}. For
further information, we refer the reader to \cite{PSS} which is a
survey of some recent developments in the theory of complex
Monge-Amp\`{e}re equation.

When $\omega$ is not K\"{a}hler, the existence of the solution of the complex
Monge-Amp\`{e}re equation has been studied under some assumptions on
$\omega$ (see \cite{Ch, GL, Ha, TW1}). For a general $\omega$,
Tosatti-Weinkove \cite{TW2} has gotten the key $C^{0}$-estimate. As
an application, they have showed that, up to scaling, the complex
Monge-Amp\`{e}re equation on a compact Hermitian manifold admits a
smooth solution when the right hand side $F$ is smooth.

In \cite{CHH}, Chen-He have proved that, on a compact K\"{a}hler
manifold of complex dimension $n$, the Laplacian estimate and the
gradient estimate hold and there exists a classical solution in
$W^{3,q_{0}}$ for the complex Monge-Amp\`{e}re equation when the
right hand side $F$ is in $W^{1,q_{0}}$ for any $q_{0}>2n$.

In this paper, we generalize the work of Chen-He \cite{CHH}. We use
a different method (we don't need the gradient estimate to get the
Laplacian estimate) to consider the regularity problem of
(\ref{CMAE}) on some compact Hermitian manifolds (including compact
K\"{a}hler manifolds).

We introduce a definition first.
\begin{definition}
Let $(M,\omega)$ be a compact Hermitian manifold of complex
dimension $n$, if for any
\begin{equation*}
\phi\in\{\varphi\in
C^{2}(M)|~\omega_{\varphi}=\omega+\sqrt{-1}\partial\bar{\partial}\varphi>0,
~\|\varphi\|_{L^{\infty}(M,\omega)}\leq\Lambda_{1}~~and~-\Lambda_{2}\omega^{n}\leq
\tilde{\omega}^{n}\leq\Lambda_{2}\omega^{n}\},
\end{equation*}
there exists a constant $C=C(\Lambda_{1},\Lambda_{2},M,\omega)$,
such that
\begin{equation*}
-C\omega^{n}\leq\sqrt{-1}\partial\bar{\partial}\tilde{\omega}^{n-1}\leq
C\omega^{n}.
\end{equation*}
Then, we say $(M,\omega)$ satisfies condition $(*)$.
\end{definition}

\begin{remark}
When $n=2$, condition $(*)$ is trivial. Since
$$\partial\bar{\partial}\tilde{\omega}=\partial\bar{\partial}\omega,$$
all compact Hermitian manifolds of complex dimension 2 satisfy
condition $(*)$.
\end{remark}

\begin{remark}
When $n=3$, if $(M,\omega)$ is a compact Hermitian manifold
satisfies
$$\partial\bar{\partial}\omega=0,$$
then we have
$$\partial\bar{\partial}\tilde{\omega}^{2}=2\partial\omega\wedge\bar{\partial}\omega,$$
which implies this Hermitian manifold $(M,\omega)$ satisfies
condition $(*)$.
\end{remark}

\begin{remark}
When $n\geq4$, Condition $(*)$ is not a very strong restricted
condition. For example, if $(M,\omega)$ is a compact Hermitian
manifold satisfies
\begin{equation}\label{condition}
\partial\bar{\partial}\omega=0~~~and~~~\partial\bar{\partial}\omega^{2}=0.
\end{equation}
Then we can conclude that $\partial\bar{\partial}\omega^{k}=0$ for
all $1\leq k\leq n-1$ (see, for example, \cite{FT}), which implies
$\partial\bar{\partial}\tilde{\omega}^{k}=0$ for all $1\leq k\leq
n-1$. Thus, such Hermitian manifold (satisfying (\ref{condition}))
satisfies condition $(*)$. For example, the product of a complex
curve with a K\"{a}hler metric and a complex surface with a
non-K\"{a}hler Gauduchon metric satisfies (\ref{condition}). More
examples are constructed in \cite{FT}.
\end{remark}

\begin{remark}
All compact K\"{a}hler manifolds satisfy condition $(*)$.
\end{remark}

Now, we state our Laplacian estimate as follows.
\begin{theorem}\label{mainthm2}
Let $(M,\omega)$ be a compact Hermitian manifold of complex
dimension n. Assume that either
\begin{enumerate}[(1)]
    \item $n=2$;~or
    \item $n\geq3$ and $(M,\omega)$ satisfies condition $(*)$.
\end{enumerate}
If $\phi$ is a smooth solution of (\ref{CMAE}), then
$$\|n+\triangle\phi\|_{L^{\infty}(M,\omega)}\leq C(\|F\|_{W^{1,q_{0}}(M,\omega)},q_{0},M,\omega).$$
\end{theorem}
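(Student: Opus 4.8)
The plan is to run a Yau-type second-order estimate adapted to the Hermitian setting, controlling the bad torsion terms by the $W^{1,q_0}$-norm of $F$ via a Moser iteration rather than a maximum principle. First I would set up the standard quantity $u = \log(n+\triangle\phi) - A\phi$ for a large constant $A$ to be chosen, where $n+\triangle\phi = \mathrm{tr}_\omega\tilde\omega > 0$. Differentiating the equation $\log\det(g_{i\bar j}+\phi_{i\bar j}) = F + \log\det g_{i\bar j}$ twice and applying $\tilde\triangle$ to $\log(n+\triangle\phi)$, I would assemble the Bochner-type inequality: the good term $A(n+\triangle\phi)\,\mathrm{tr}_{\tilde g}g$ (coming from $-A\tilde\triangle\phi = -An + A\,\mathrm{tr}_{\tilde g}g$) must dominate the curvature of $\omega$, the torsion terms, and the contribution of $\triangle F$. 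In the Kähler case the only obstruction is $\triangle F$; here we additionally pick up first-order torsion terms of the schematic form $T * \tilde{\nabla}(n+\triangle\phi)$ and $T*T*(\text{metric traces})$, plus crucially terms involving $\partial\bar\partial\omega$ which is exactly where condition $(*)$ enters.

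Second, I would handle the torsion. The genuinely dangerous terms are those linear in $\tilde\nabla(n+\triangle\phi)$; these I would absorb into the good gradient term $\tilde g^{i\bar j}\partial_i(n+\triangle\phi)\partial_{\bar j}(n+\triangle\phi)/(n+\triangle\phi)^2$ produced by $\tilde\triangle\log(n+\triangle\phi)$ using Cauchy--Schwarz, at the cost of enlarging $A$. The zeroth-order torsion terms and the curvature term are bounded by $C\,\mathrm{tr}_{\tilde g}g$, hence swallowed once $A$ is large. For $n\geq 3$ the term $\sqrt{-1}\partial\bar\partial\tilde\omega^{n-1}$ appears (after expressing things through $\tilde\omega^{n-1}$); here I invoke the $C^0$-estimate of Tosatti--Weinkove \cite{TW2} together with the two-sided bound $e^{-\|F\|_\infty}\omega^n\le\tilde\omega^n\le e^{\|F\|_\infty}\omega^n$ to verify the hypotheses of condition $(*)$, which then gives $-C\omega^n\le\sqrt{-1}\partial\bar\partial\tilde\omega^{n-1}\le C\omega^n$; translating this back yields a bound of the form $C\,\mathrm{tr}_{\tilde g}g$ for that term as well. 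For $n=2$ the remark in the excerpt makes this term harmless since $\partial\bar\partial\tilde\omega=\partial\bar\partial\omega$.

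Third, after all absorptions I expect an inequality of the shape
\begin{equation*}
\tilde\triangle u \;\geq\; -C_1 \;-\; C_2\,|\triangle F|\,(n+\triangle\phi) \;-\; C_3\,|\nabla F|\,(n+\triangle\phi) \;+\; \tfrac{1}{2}A\,(n+\triangle\phi)\,\mathrm{tr}_{\tilde g}g,
\end{equation*}
where I have also used $\mathrm{tr}_{\tilde g}g \geq n (\det\tilde g/\det g)^{-1/n} \geq c\,e^{-F/n}$ and, in the reverse direction, $n+\triangle\phi \leq C\,(\mathrm{tr}_{\tilde g}g)^{n-1}e^{F}$ to relate the two metric traces. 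Since $F\in W^{1,q_0}\hookrightarrow C^{0}$ (Sobolev, as $q_0>2n$) we may treat $e^{\pm F}$ as bounded. At this point a pointwise maximum principle is unavailable because $\triangle F$ is only in $W^{-1,q_0}$-type space; instead I would feed the differential inequality into a Moser iteration on $(M,\omega)$: test against powers of $(n+\triangle\phi)$, integrate by parts to move one derivative off $F$ (this is why $\|F\|_{W^{1,q_0}}$ and not $\|F\|_{W^{2,q_0}}$ suffices), use the Sobolev inequality on the compact manifold, and the uniform volume bound $\Vol_\omega(M)$ plus the $C^0$-bound on $\phi$ to start the iteration from an $L^{p_0}$-bound for some finite $p_0$; the $W^{1,q_0}$-norm of $F$ with $q_0>2n$ is exactly the integrability threshold that keeps the iteration constants summable. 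Iterating gives $\|n+\triangle\phi\|_{L^\infty}\leq C(\|F\|_{W^{1,q_0}},q_0,M,\omega)$.

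The main obstacle I anticipate is the bookkeeping of the torsion and $\partial\bar\partial\omega$ terms in the Hermitian Bochner formula — making sure every term that is not manifestly $O(\mathrm{tr}_{\tilde g}g)$ or absorbable into the good gradient term is either controlled by condition $(*)$ or is an exact divergence that survives the integration by parts in the Moser scheme. The secondary technical point is starting the Moser iteration: one needs an a priori $L^{p_0}$ bound on $n+\triangle\phi$ for some $p_0>1$, which should follow by testing the differential inequality against a constant and using $\int_M(n+\triangle\phi)\omega^n = n\int_M\omega^n + \int_M\sqrt{-1}\partial\bar\partial\phi\wedge\omega^{n-1}$, the last integral being controlled (Stokes plus $\partial\bar\partial\omega$ bounded, using the $C^0$-estimate again) — this is where the Hermitian case genuinely diverges from Kähler, since $\int_M\triangle\phi\,\omega^n$ need not vanish.
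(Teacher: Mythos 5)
Your overall skeleton (a Tosatti--Weinkove-type second-order differential inequality with torsion absorbed by Cauchy--Schwarz, followed by an integral/Moser argument that moves one derivative off $F$ so that $\|F\|_{W^{1,q_{0}}}$ suffices) matches the paper's, but there is a genuine gap precisely at the step you dismiss as "secondary": starting the iteration. Once you test against powers of $n+\triangle\phi$ and convert $|\tilde{\nabla}\cdot|^{2}$ into $|\nabla\cdot|^{2}$ (which costs a factor of $n+\triangle\phi$, as in Lemma \ref{lem4.1}), the iteration inequality unavoidably contains $\int_{M}u^{p+1}\omega^{n}$ on the right, together with $\int_{M}u^{p}|\nabla F|^{2}\omega^{n}$ and $\int_{M}u^{p}|\nabla\phi||\nabla F|\omega^{n}$. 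The Sobolev step only improves the exponent by the factor $\beta=\frac{n}{n-1}$, while the right-hand side gains a full power $+1$; moreover the H\"{o}lder step against $|\nabla F|^{2}\in L^{q_{0}/2}$ forces the exponent $u^{pq_{0}/(q_{0}-2)}$. Consequently the scheme closes only if you already control $\|u\|_{L^{s}}$ for $s$ comparable to $q_{0}/2$ (certainly $s>n$); it cannot be launched from the $L^{1}$ bound $\int_{M}(n+\triangle\phi)\,\omega^{n}\leq C$ that Stokes' theorem provides, which is all your proposal supplies. Obtaining that starting bound is the actual crux of the theorem. The paper gets it by using the Monge--Amp\`{e}re structure once more: the elementary inequality $tr_{g}\tilde{g}\leq(tr_{\tilde{g}}g)^{n-1}e^{F}$ upgrades the good zeroth-order term to $C(p)(n+\triangle\phi)^{p+\frac{1}{n-1}}$ (Lemma \ref{lem2.3}), so that integrating yields a superlinear gain $\int_{M}u^{p+\frac{1}{n-1}}\omega^{n}$ on the left (Lemma \ref{lem4.4}); combining this with Calder\'on--Zygmund/Gagliardo--Nirenberg control of $\|\nabla\phi\|_{L^{p}}$ in terms of $\|\triangle\phi\|_{L^{r}}$ (Lemmas \ref{lem3.5}--\ref{lem3.7}) closes a loop of the form $\|u\|_{L^{q_{0}/2}}\leq C\|u\|_{L^{q_{0}/(2\beta)}}^{1/\beta}+C$, where $\beta>1$ allows absorption. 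Your proposal also never says how $\|\nabla\phi\|_{L^{q_{0}}}$ (which appears as soon as the derivative moved off $F$ hits the $\phi$-dependent weight) is to be bounded; this is the same circularity, resolved in the paper only through those elliptic estimates and the same exponent bookkeeping.

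A smaller but real confusion: condition $(*)$ does not enter the pointwise Bochner formula. The inequality for $\tilde{\triangle}\log tr_{g}\tilde{g}$ holds on every compact Hermitian manifold with no structural hypothesis (Lemma \ref{lem2.2}, quoted from \cite{TW3}); the form $\sqrt{-1}\partial\bar{\partial}\tilde{\omega}^{n-1}$ appears only when one integrates by parts against $\tilde{\omega}^{n-1}$, as in Lemmas \ref{lem4.2} and \ref{lem4.4}, and that is the only place where condition $(*)$ (or $n=2$) is used. In a pointwise second-order estimate this term would involve fourth derivatives of $\phi$ and could not be "bounded by $C\,tr_{\tilde{g}}g$" as you describe, so that part of your step two would have to be reorganized into the integral stage in any case.
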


Usually, we need the gradient estimate to derive the Laplacian
estimate. However, the computation on Hermitian manifolds is more
complicated due to the existence of torsion terms. As a result, the
gradient estimate is very difficult to get. In order to solve this
problem, we introduce a new method to get the Laplacian estimate
directly. By using Moser's iteration (see \cite{Mo}), $L^{p}$
estimates (for example, see \cite{GT}) and some interpolation
inequalities, we can obtain the Lapalcian estimate without doing any
calculation about the gradient, which makes the argument more simple
and clear. Therefore, we believe that our ideas can be applied to
other nonlinear equations on compact manifolds.

As an application of Theorem \ref{mainthm2}, we have the following
theorem.
\begin{theorem}\label{mainthm1}
Assume that $(M,\omega)$ satisfies (1) or (2) in Theorem
\ref{mainthm2}. Let F be a function in $W^{1,q_{0}}$ for any
$q_{0}>2n$. Then, there exist a function $\phi\in W^{3,q_{0}}$ and a
constant $b$, such that
\begin{equation*} \left\{
\begin{array}{ll}
\ \tilde{\omega}^{n}=e^{F+b}\omega^{n}\\
\ \tilde{\omega}=\omega+\sqrt{-1}\partial\bar{\partial}\phi>0\\
\ \sup_{M}\phi=-1
\end{array}.\right.
\end{equation*}
\end{theorem}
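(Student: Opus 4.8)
The plan is to use the continuity method together with the a priori estimates developed in the paper. First I would approximate the given $F \in W^{1,q_0}$ by a sequence of smooth functions $F_\varepsilon$ converging to $F$ in $W^{1,q_0}(M,\omega)$; since $q_0 > 2n$, Sobolev embedding gives $F_\varepsilon \to F$ uniformly as well, so in particular $\|F_\varepsilon\|_{W^{1,q_0}(M,\omega)}$ is uniformly bounded. For each $\varepsilon$, I invoke the result of Tosatti-Weinkove \cite{TW2}: since $F_\varepsilon$ is smooth, there exist a smooth function $\phi_\varepsilon$ and a constant $b_\varepsilon$ solving
\begin{equation*}
\tilde{\omega}_\varepsilon^{n} = e^{F_\varepsilon + b_\varepsilon}\omega^{n}, \qquad \tilde{\omega}_\varepsilon = \omega + \sqrt{-1}\partial\bar{\partial}\phi_\varepsilon > 0, \qquad \sup_M \phi_\varepsilon = -1.
\end{equation*}
The constant $b_\varepsilon$ is determined by integrating the equation against $\omega^n$, so $e^{b_\varepsilon}\int_M e^{F_\varepsilon}\omega^n = \int_M \tilde{\omega}_\varepsilon^n$; using that $\int_M \tilde{\omega}_\varepsilon^n$ is controlled in terms of $\int_M \omega^n$ up to lower-order torsion corrections (as in the Hermitian Calabi-Yau setting), one gets a uniform bound $|b_\varepsilon| \le C(\|F\|_{W^{1,q_0}}, M, \omega)$, and along a subsequence $b_\varepsilon \to b$.

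Next I would assemble the a priori estimates uniform in $\varepsilon$. The $C^0$-estimate $\|\phi_\varepsilon\|_{L^\infty(M,\omega)} \le C$ follows from the Tosatti-Weinkove $C^0$-estimate \cite{TW2}, which only needs $F_\varepsilon + b_\varepsilon$ bounded in $L^\infty$ (equivalently, a bound on $\|F\|_{W^{1,q_0}}$ via Sobolev) and the normalization $\sup_M \phi_\varepsilon = -1$. Then Theorem \ref{mainthm2} applies to each smooth solution $\phi_\varepsilon$ and yields
\begin{equation*}
\|n + \triangle\phi_\varepsilon\|_{L^\infty(M,\omega)} \le C(\|F_\varepsilon\|_{W^{1,q_0}(M,\omega)}, q_0, M, \omega) \le C(\|F\|_{W^{1,q_0}(M,\omega)}, q_0, M, \omega).
\end{equation*}
From the uniform Laplacian bound together with the Monge-Amp\`ere equation $\tilde\omega_\varepsilon^n = e^{F_\varepsilon + b_\varepsilon}\omega^n$, the metrics $\tilde\omega_\varepsilon$ are uniformly equivalent to $\omega$: the upper bound on $\mathrm{tr}_\omega\tilde\omega_\varepsilon$ is immediate, and the lower bound on each eigenvalue follows from the determinant equation and the $L^\infty$ bound on $F_\varepsilon + b_\varepsilon$. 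Hence $\sqrt{-1}\partial\bar\partial\phi_\varepsilon \in L^\infty$ uniformly, so $\phi_\varepsilon$ is bounded in $W^{2,p}$ for every $p < \infty$, and by Sobolev embedding in $C^{1,\alpha}$ for every $\alpha < 1$.

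Now I would upgrade to $W^{3,q_0}$ by differentiating the equation. Writing the Monge-Amp\`ere equation in the logarithmic form $\log\det(g_{i\bar j} + (\phi_\varepsilon)_{i\bar j}) = F_\varepsilon + b_\varepsilon + \log\det(g_{i\bar j})$ and applying a first-order derivative $\partial_k$, one obtains a linear second-order elliptic equation for $\partial_k\phi_\varepsilon$ whose leading coefficients are $\tilde g_\varepsilon^{i\bar j}$ — these are bounded in $C^\alpha$ uniformly (because $\tilde g_\varepsilon$ is $C^\alpha$-bounded, being $C^{1,\alpha}$ in $\phi_\varepsilon$ up to the known background terms, wait—more carefully, $\tilde g_\varepsilon^{i\bar j}$ is merely bounded in $L^\infty$ and in $W^{1,p}$ from the $W^{2,p}$ bound) — and whose right-hand side involves $\partial_k F_\varepsilon \in L^{q_0}$ plus lower-order terms already controlled in $L^{q_0}$. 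Applying the $L^p$ (Calder\'on-Zygmund) estimates for elliptic equations with $W^{1,p}$, hence $C^0$ after Sobolev, coefficients, locally and patching with a partition of unity, gives a uniform bound $\|\phi_\varepsilon\|_{W^{3,q_0}(M,\omega)} \le C(\|F\|_{W^{1,q_0}}, q_0, M, \omega)$. Finally, by weak compactness in $W^{3,q_0}$ and the compact embedding $W^{3,q_0} \hookrightarrow C^{2,\beta}$ (valid since $q_0 > 2n$), a subsequence $\phi_\varepsilon \to \phi$ in $C^{2,\beta}$ with $\phi \in W^{3,q_0}$; passing to the limit in the equation and using $F_\varepsilon \to F$ in $W^{1,q_0}$ and $b_\varepsilon \to b$ gives that $(\phi, b)$ solves the desired system, with $\tilde\omega > 0$ from the uniform positive lower bound on the eigenvalues and $\sup_M \phi = -1$ from uniform convergence.

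The main obstacle is obtaining the $C^\alpha$ or even $L^\infty$ control of the coefficients $\tilde g_\varepsilon^{i\bar j}$ needed to run the linear $L^p$ theory for the third derivative — this is exactly where the uniform Laplacian estimate of Theorem \ref{mainthm2} is indispensable, since it converts the degenerate-looking Monge-Amp\`ere equation into a genuinely uniformly elliptic linear one. A secondary technical point is keeping all torsion-type error terms (which on a Hermitian, non-K\"ahler manifold appear when differentiating the equation and when integrating $\tilde\omega_\varepsilon^n$) uniformly bounded in the appropriate $L^p$ norms; under hypothesis (1) or (2) of Theorem \ref{mainthm2} these are harmless, being controlled by the background geometry and the already-established $C^{1,\alpha}$ bound on $\phi_\varepsilon$.
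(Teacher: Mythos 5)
Your overall strategy (approximate $F$ by smooth $F_k$, solve by Tosatti--Weinkove, prove uniform estimates, pass to the limit in $W^{3,q_0}$) is exactly the paper's, but there is a genuine gap at the crucial step from the uniform Laplacian bound to the uniform $W^{3,q_0}$ bound. After differentiating the equation, $\partial_k\phi_\varepsilon$ satisfies a linear second-order equation with leading coefficients $\tilde g_\varepsilon^{\,i\bar j}$, and to apply Calder\'on--Zygmund $L^{q_0}$ estimates (with $q_0>2n$ large) you need these coefficients to be uniformly continuous --- in practice uniformly bounded in $C^\alpha$ (or VMO with small norm). Uniform ellipticity, i.e. the two-sided $L^\infty$ bound on $\tilde g_\varepsilon$ coming from Theorem \ref{mainthm2} and the determinant equation, is \emph{not} sufficient: $W^{2,p}$ estimates for non-divergence equations with merely bounded measurable coefficients fail for $p$ away from $2$. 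Your parenthetical claim that $\tilde g_\varepsilon^{\,i\bar j}$ is bounded in $W^{1,p}$ ``from the $W^{2,p}$ bound'' is circular, since a $W^{1,p}$ bound on $\tilde g_\varepsilon = g + \sqrt{-1}\partial\bar\partial\phi_\varepsilon$ is precisely the third-order bound on $\phi_\varepsilon$ you are trying to establish. The missing ingredient is an Evans--Krylov-type interior estimate: the paper fills this hole by quoting the $C^{2,\alpha}$ estimate of Tosatti--Wang--Weinkove--Yang (Theorem \ref{thm5.1}), which applies because $q_0>2n$ gives $F\in C^{\alpha_0}$ by Sobolev embedding and requires only the $L^\infty$ bounds on $\phi_\varepsilon$ and $\triangle\phi_\varepsilon$ already obtained. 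With that uniform $C^{2,\alpha}$ bound the coefficients are uniformly H\"older and your $L^p$ argument and compactness step go through as you describe; without it, the proof does not close.

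A secondary point: your bound on $b_\varepsilon$ by integrating the equation is delicate on a non-K\"ahler Hermitian manifold, because $\int_M\tilde\omega_\varepsilon^n$ is not a cohomological quantity and the ``torsion corrections'' involve $\phi_\varepsilon$ itself, risking circularity with the $C^0$ estimate. The paper avoids this by the maximum principle: at an interior maximum of $\phi_\varepsilon$ one has $\tilde\omega_\varepsilon^n\le\omega^n$, hence $b_\varepsilon\le-\inf_M F_\varepsilon$, and similarly at a minimum, giving $|b_\varepsilon|\le C(\|F_\varepsilon\|_{L^\infty})$ directly.
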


~~~~~~~~~~~~~~~~\\
~~~~~~~~~~~~~~~~\\
\noindent\textbf{Acknowledgments} ~~The author would like to thank
his advisor Gang Tian for leading him to study the complex
Monge-Amp\`{e}re equation, constant encouragement and several useful
comments on an earlier version of this paper. The author would also
like to thank Valentino Tosatti for his helpful comments and
suggestions, especially for pointing out that Lemma \ref{lem2.2}
holds when the background metric is not balanced when $n\geq3$,
which helps the author to remove the assumption (balanced condition
when $n\geq3$) in an earlier version of this paper. The author would
also like to thank Wenshuai Jiang and Feng Wang for many helpful
conversations.

\section{Some preliminary computations}
We need the following $C^{0}$-estimate from \cite{TW2}.
\begin{theorem}\label{thm2.1}
For any compact Hermaitian manifold $(M,\omega)$, if $\phi$ is a
smooth solution of (\ref{CMAE}), then we have
$$\|\phi\|_{L^{\infty}(M,\omega)}\leq C,$$
where $C=C( \sup_{M}F,M,\omega)$.
\end{theorem}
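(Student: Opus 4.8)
Since $\sup_{M}\phi=-1$ we have $-\phi\geq1>0$, and the quantity to be controlled is $\sup_{M}(-\phi)=\|\phi\|_{L^{\infty}(M,\omega)}$. The plan is a Moser iteration of Yau type: first bound $\|\phi\|_{L^{\infty}}$ by a fixed $L^{p_{0}}$ norm of $\phi$, and then bound that norm separately. The starting point is the pointwise algebraic identity
\begin{equation*}
\tilde{\omega}^{n}-\omega^{n}=\sqrt{-1}\partial\bar{\partial}\phi\wedge T,\qquad T:=\sum_{k=0}^{n-1}\tilde{\omega}^{k}\wedge\omega^{n-1-k}>0,
\end{equation*}
which rewrites the equation $\tilde{\omega}^{n}=e^{F}\omega^{n}$ as $(e^{F}-1)\omega^{n}=\sqrt{-1}\partial\bar{\partial}\phi\wedge T$. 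Testing against $(-\phi)^{p}$ for $p\geq1$ and integrating over $M$ then links the right-hand side, controlled by $\sup_{M}F$, to a positive Dirichlet-type energy of $\phi$.

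Concretely, I would multiply by $(-\phi)^{p}$ and integrate by parts. Since $\partial((-\phi)^{p})=-p(-\phi)^{p-1}\partial\phi$, Stokes' theorem gives
\begin{equation*}
\int_{M}(-\phi)^{p}(e^{F}-1)\omega^{n}=p\int_{M}(-\phi)^{p-1}\sqrt{-1}\partial\phi\wedge\bar{\partial}\phi\wedge T+\int_{M}\sqrt{-1}(-\phi)^{p}\bar{\partial}\phi\wedge\partial T.
\end{equation*}
The first term on the right is nonnegative and dominates, through its $k=0$ summand $\sqrt{-1}\partial\phi\wedge\bar{\partial}\phi\wedge\omega^{n-1}$, a multiple of $p\int_{M}(-\phi)^{p-1}|\nabla\phi|^{2}\omega^{n}=\frac{4p}{(p+1)^{2}}\int_{M}|\nabla(-\phi)^{(p+1)/2}|^{2}\omega^{n}$. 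The left-hand side is at most $(e^{\sup_{M}F}-1)\int_{M}(-\phi)^{p}\omega^{n}$, since $(-\phi)^{p}\geq0$ and the integrand where $F<0$ only helps. Feeding the resulting gradient bound into the Sobolev inequality on $(M,\omega)$ (with Sobolev ratio $\chi=\frac{n}{n-1}$ in real dimension $2n$) yields a reverse-H\"older inequality of the schematic form $\|\phi\|_{L^{\chi q}}\leq(Cq)^{1/q}\|\phi\|_{L^{q}}$ with $q=p+1$; iterating over a geometric sequence of exponents produces $\|\phi\|_{L^{\infty}}\leq C\|\phi\|_{L^{p_{0}}}$ for a fixed $p_{0}$, with $C$ depending only on $\sup_{M}F$, $M$ and $\omega$.

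The main obstacle is the torsion term $\int_{M}\sqrt{-1}(-\phi)^{p}\bar{\partial}\phi\wedge\partial T$, which is absent in the K\"ahler case (there $dT=0$) but must be controlled here. Using $\partial\partial\bar{\partial}\phi=0$, so that $\partial\tilde{\omega}=\partial\omega$, one computes $\partial T=\partial\omega\wedge S$ with $S$ a nonnegative $(n-2,n-2)$-form built from $\tilde{\omega}$ and $\omega$; the term is thus linear in $\nabla\phi$ with bounded torsion coefficient. A weighted Cauchy--Schwarz inequality, splitting $(-\phi)^{p}=(-\phi)^{(p-1)/2}(-\phi)^{(p+1)/2}$ and matching powers of $\tilde{\omega}$, absorbs its gradient part into the good term $p\int_{M}(-\phi)^{p-1}\sqrt{-1}\partial\phi\wedge\bar{\partial}\phi\wedge T$ at the cost of weighted mixed-volume integrals $\int_{M}(-\phi)^{p+1}\tilde{\omega}^{j}\wedge\omega^{n-j}$ for $0\leq j\leq n-2$. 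The genuinely hard point is that, at this stage, no a priori bound on $\tilde{\omega}$ (equivalently on $\triangle\phi$) is available, so these integrals cannot be estimated termwise; I expect to control them by a downward induction on $j$, repeatedly writing $\tilde{\omega}=\omega+\sqrt{-1}\partial\bar{\partial}\phi$, integrating by parts, and using the endpoint identity $\tilde{\omega}^{n}=e^{F}\omega^{n}\leq e^{\sup_{M}F}\omega^{n}$ to close the recursion while keeping the dependence on $p$ summable. This is the technical heart of the estimate, and is exactly what is carried out in \cite{TW2}.

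Finally, the base case $\|\phi\|_{L^{p_{0}}}\leq C$ reduces to an $L^{1}$ bound. Because $\tilde{\omega}>0$ forces $n+\triangle\phi=\mathrm{tr}_{\omega}\tilde{\omega}>0$, the function $\triangle\phi$ is bounded below by $-n$, and this requires no estimate. Passing to the Gauduchon representative in the conformal class of $\omega$ (so that the corresponding Laplacian integrates to zero against the volume form), and using the Green's function representation together with the normalization $\sup_{M}\phi=-1$, one bounds $\sup_{M}\phi-\frac{1}{V}\int_{M}\phi\,\omega^{n}$, hence $\|\phi\|_{L^{1}(M,\omega)}=-\int_{M}\phi\,\omega^{n}$, by a constant depending only on the geometry, where $V$ denotes the total volume. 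Combined with the iteration this gives $\|\phi\|_{L^{\infty}(M,\omega)}\leq C(\sup_{M}F,M,\omega)$, as claimed.
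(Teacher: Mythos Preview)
The paper does not prove this theorem at all: it simply quotes the result from \cite{TW2} and moves on. Your proposal is a faithful outline of the Tosatti--Weinkove argument in \cite{TW2}, which is precisely what the paper is citing, so there is no discrepancy in approach---you have just filled in what the paper leaves as a black box.

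Your sketch is essentially correct. The identity $\tilde{\omega}^{n}-\omega^{n}=\sqrt{-1}\partial\bar{\partial}\phi\wedge T$, the integration by parts producing the good Dirichlet term, the torsion correction $\int(-\phi)^{p}\bar{\partial}\phi\wedge\partial T$, the Cauchy--Schwarz absorption into the good term, the downward induction on the mixed-volume integrals $\int(-\phi)^{p+1}\tilde{\omega}^{j}\wedge\omega^{n-j}$ (which you correctly identify as the technical heart and attribute to \cite{TW2}), and the $L^{1}$ bound via passage to a Gauduchon conformal representative and Green's function---all of this matches the structure of \cite{TW2}. One small remark: in the iteration step you write the left-hand side bound as $(e^{\sup_{M}F}-1)\int(-\phi)^{p}\omega^{n}$ and say the region where $F<0$ ``only helps''; strictly speaking the sign of $(e^{F}-1)$ there is negative while $(-\phi)^{p}\geq0$, so that portion of the integral contributes with the wrong sign on the left and must be moved to the right or bounded via $|e^{F}-1|\leq e^{\sup_{M}F}+1$, but this is harmless. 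Otherwise the outline stands.
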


We need the following lemma from \cite{TW3}.
\begin{lemma}\label{lem2.2}
Let $(M,\omega)$ be a compact Hermitian manifold of complex
dimension $n$. If $\phi$ is a smooth solution of (\ref{CMAE}), then
for any $\epsilon>0$, we have
\begin{equation}
\tilde{\triangle}(\triangle
\phi)+(\epsilon-1)\frac{|\tilde{\nabla}(\triangle\phi)|^{2}}{(n+\triangle\phi)}\geq\triangle
F-A(1+\frac{1}{\epsilon})(n+\triangle\phi)(n-\tilde{\triangle}\phi),
\end{equation}
where $A=A(M,\omega,\|F\|_{L^{\infty}(M,\omega)})$.
\end{lemma}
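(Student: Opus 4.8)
This is the Hermitian counterpart of the classical second order (Aubin--Yau) a priori estimate for the complex Monge--Amp\`ere equation, so I would prove it by differentiating the equation twice and applying $\tilde\triangle$ to the scalar $\triangle\phi$. Rewrite (\ref{CMAE}) as $\log\det\tilde g_{i\bar j}=F+\log\det g_{i\bar j}$, fix an arbitrary $p\in M$, and choose holomorphic coordinates centred at $p$ with $g_{i\bar j}(p)=\delta_{ij}$ and $(\tilde g_{i\bar j}(p))$ diagonal. The one structural difference from the K\"ahler situation is that, $\omega$ not being closed, one cannot in addition arrange $\partial_k g_{i\bar j}(p)=0$: the surviving first derivatives of $g$ at $p$ (the torsion of $\omega$), together with the curvature of $\omega$, are bounded by a constant depending only on $(M,\omega)$, and this, with $\|F\|_{L^\infty(M,\omega)}$, is what $A$ absorbs. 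Throughout I would use the elementary identities $n+\triangle\phi=g^{i\bar j}\tilde g_{i\bar j}$ and $n-\tilde\triangle\phi=\tilde g^{i\bar j}g_{i\bar j}$ --- the (positive) traces of $\tilde\omega$ in $\omega$ and of $\omega$ in $\tilde\omega$ --- together with the lower bound $n+\triangle\phi\ge n\,e^{F/n}\ge c_0>0$ (from the arithmetic--geometric mean inequality and the $L^\infty$ bound on $F$), which lets one trade a single factor $n-\tilde\triangle\phi$ against the product $(n+\triangle\phi)(n-\tilde\triangle\phi)\ (\ge n^{2})$ whenever convenient.

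Differentiating $\log\det\tilde g=F+\log\det g$ twice, evaluating at $p$, and rearranging by means of $\partial_k\partial_{\bar l}\tilde g_{i\bar j}=\partial_k\partial_{\bar l}g_{i\bar j}+\phi_{i\bar j k\bar l}$ and the commutativity of the fourth partials of $\phi$ produces an identity of the shape
$$\tilde\triangle(\triangle\phi)=\triangle F+Q+E,\qquad Q:=\sum_{k,p,r}\frac{|\partial_k\tilde g_{r\bar p}|^{2}}{\tilde g_{p\bar p}\,\tilde g_{r\bar r}}\ \ge\ 0,$$
where the error $E$ collects: (a) contractions of the curvature of $\omega$ against $\tilde g^{-1}$ (possibly times $\tilde g_{i\bar i}$); (b) terms quadratic in the torsion of $\omega$ contracted against $\tilde g^{-1}$; and (c) ``cross terms'' pairing one first derivative of $g$ with one first derivative of $\tilde g$ (equivalently, with one third derivative of $\phi$). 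In the K\"ahler case the terms of types (b) and (c) are absent, $Q\ge|\tilde\nabla(\triangle\phi)|^{2}/(n+\triangle\phi)$ holds exactly --- by Cauchy--Schwarz, using the identity $\partial_k\tilde g_{i\bar j}=\partial_i\tilde g_{k\bar j}$ valid at $p$ in K\"ahler normal coordinates --- and one recovers Yau's inequality directly, with no $\epsilon$ needed.

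In general the estimate is completed in three steps. First, the same Cauchy--Schwarz argument, now invoking the near-symmetry $\partial_k\tilde g_{i\bar j}=\partial_i\tilde g_{k\bar j}+(\partial_k g_{i\bar j}-\partial_i g_{k\bar j})$ with bounded last bracket, still yields $Q\ge(1-\delta)|\tilde\nabla(\triangle\phi)|^{2}/(n+\triangle\phi)$ up to errors of types (a)--(c). Second --- the step I expect to be the main obstacle --- one must control the cross terms of type (c): after using the once-differentiated equation and the special structure of the Chern torsion of $\omega$, these are seen to involve a first covariant derivative of $n+\triangle\phi$ rather than arbitrary third derivatives of $\phi$, so Young's inequality absorbs them into $\delta'\,|\tilde\nabla(\triangle\phi)|^{2}/(n+\triangle\phi)$ plus terms of type (b); the factor $\tfrac{C}{\delta'}$ so produced, together with the loss $\delta$ above, is the source of the $1+\tfrac1\epsilon$ in the statement (take $\delta=\delta'=\tfrac\epsilon2$), and --- importantly --- this absorption needs no balanced or Gauduchon-type hypothesis on $\omega$. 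Third, all terms of types (a) and (b) are bounded below by $-A\,(n+\triangle\phi)(n-\tilde\triangle\phi)$: in the diagonalising coordinates each is a sum of the form $\sum_{i,k}(\text{bounded})\,\tilde g_{i\bar i}/\tilde g_{k\bar k}$ or $\sum_k(\text{bounded})/\tilde g_{k\bar k}$, and $\sum_{i,k}\tilde g_{i\bar i}/\tilde g_{k\bar k}=(n+\triangle\phi)(n-\tilde\triangle\phi)$, the single-trace sums being absorbed into the product through $n+\triangle\phi\ge c_0$. Assembling the three steps gives
$$\tilde\triangle(\triangle\phi)\ \ge\ \triangle F+(1-\epsilon)\frac{|\tilde\nabla(\triangle\phi)|^{2}}{n+\triangle\phi}-A\left(1+\frac1\epsilon\right)(n+\triangle\phi)(n-\tilde\triangle\phi),$$
which is the assertion after transposing the gradient term to the left-hand side.
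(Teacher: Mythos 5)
Your plan is correct in outline, but it takes a genuinely different route from the paper: you propose to re-derive the second-order differential inequality from scratch by differentiating $\log\det\tilde g=F+\log\det g$ twice and fighting the torsion terms directly, whereas the paper simply quotes inequality (9.5) of Tosatti--Weinkove \cite{TW3},
$\tilde{\triangle}(\log \mathrm{tr}_{g}\tilde{g})\geq\frac{2}{(\mathrm{tr}_{g}\tilde{g})^{2}}\mathrm{Re}\bigl(\tilde{g}^{k\bar{l}}T_{ik}^{i}(\mathrm{tr}_{g}\tilde{g})_{\bar{l}}\bigr)+\frac{\triangle F}{\mathrm{tr}_{g}\tilde{g}}-C_{1}\mathrm{tr}_{\tilde{g}}g-C_{1}$,
multiplies through by $\mathrm{tr}_{g}\tilde g=n+\triangle\phi$, applies Cauchy--Schwarz with the parameter $\epsilon$ to the torsion term, and uses $n+\triangle\phi\geq ne^{F/n}$, $n-\tilde{\triangle}\phi\geq ne^{-F/n}$ to absorb the remaining factors --- a three-line argument once (9.5) is granted. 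Your steps~1--2 are exactly the content of that cited inequality: the assertion that after Cauchy--Schwarz and ``the special structure of the Chern torsion'' the dangerous cross terms reduce to the single term $\tilde g^{k\bar l}T^{i}_{ik}\partial_{\bar l}(n+\triangle\phi)$ is true (it is Cherrier's trick as refined by Tosatti--Weinkove, and it indeed needs no balanced or Gauduchon hypothesis), but it is the entire technical difficulty of the Hermitian case and you give it only one sentence; if you carry it out, be aware that the general third-order cross terms are not individually expressible through $\partial(n+\triangle\phi)$ --- only the torsion-trace part is --- and the rest must be absorbed into the slack of the same positive square sum $Q$ you already spent in step~1, so steps~1 and~2 have to be executed jointly rather than sequentially. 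Your step~3 and the final bookkeeping (Young's inequality producing the factor $1+\tfrac1\epsilon$, and trading single traces for the product $(n+\triangle\phi)(n-\tilde{\triangle}\phi)$ via the lower bounds on the traces coming from $\|F\|_{L^{\infty}}$) coincide with what the paper does after quoting \cite{TW3}. In short: your route is self-contained but reproves a known, delicate estimate; the paper's route is shorter because it outsources precisely the step you flag as the main obstacle.
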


\begin{proof}
We need the following equation from \cite{TW3} (this equation is
(9.5) in \cite{TW3}).
\begin{equation*}
\tilde{\triangle}(\log(tr_{g}\tilde{g}))\geq\frac{2}{(tr_{g}\tilde{g})^{2}}Re(\tilde{g}^{k\bar{l}}T_{ik}^{i}(tr_{g}\tilde{g})_{\bar{l}})+\frac{\triangle
F}{tr_{g}\tilde{g}} -C_{1}tr_{\tilde{g}}g-C_{1},
\end{equation*}
where the tensor $T$ is the torsion of $(M,\omega)$ and
$C_{1}=C_{1}(M,\omega,\|F\|_{L^{\infty}(M,\omega)})$. By some
calculation, we have
\begin{equation*}
\tilde{\triangle}(\triangle
\phi)-\frac{|\tilde{\nabla}(\triangle\phi)|^{2}}{(n+\triangle\phi)}
\geq\frac{2}{(n+\triangle\phi)}Re(\tilde{g}^{k\bar{l}}T_{ik}^{i}(\triangle\phi)_{\bar{l}})+\triangle
F-C_{2}(n+\triangle\phi)(n-\tilde{\triangle}\phi),
\end{equation*}
where $C_{2}=C_{2}(M,\omega,\|F\|_{L^{\infty}(M,\omega)})$ and we
have used $tr_{\tilde{g}}g=(n-\tilde{\triangle}\phi)\geq
ne^{-\frac{F}{n}}$. By the Cauchy-Schwarz inequality, for any
$\epsilon>0$, we get
\begin{equation*}
\tilde{\triangle}(\triangle
\phi)-\frac{|\tilde{\nabla}(\triangle\phi)|^{2}}{(n+\triangle\phi)}
\geq-\epsilon\frac{|\tilde{\nabla}(\triangle\phi)|^{2}}{(n+\triangle\phi)}
-\frac{A}{\epsilon}(n+\triangle\phi)(n-\tilde{\triangle}\phi)+\triangle
F-A(n+\triangle\phi)(n-\tilde{\triangle}\phi),
\end{equation*}
where $A=A(M,\omega,\|F\|_{L^{\infty}(M,\omega)})$ and we have used
$(n+\triangle\phi)\geq ne^{\frac{F}{n}}$. Then, we complete the
proof.
\end{proof}

\begin{lemma}\label{lem2.3}
Let $(M,\omega)$ be a compact Hermitian manifold of complex
dimension $n$. If $\phi$ is a smooth solution of (\ref{CMAE}), then
for any $p\geq 1$, we have
\begin{equation*}
\begin{split}
\tilde{\triangle}(e^{f_{p}(\phi)}(n+\triangle\phi)^{p})&\geq
C_{1}(p)(n+\triangle\phi)^{p+\frac{1}{n-1}}-C_{2}(p)(n+\triangle\phi)^{p}\\[4pt]
&~~~~+pe^{f_{p}(\phi)}(n+\triangle\phi)^{p-1}\triangle F,
\end{split}
\end{equation*}
where $f_{p}(\phi)=e^{-A(p+3)\phi}$,
$C_{1}(p)=C_{1}(p,\|F\|_{L^{\infty}(M,\omega)},M,\omega)$,
$C_{2}(p)=C_{2}(p,\|F\|_{L^{\infty}(M,\omega)},M,\omega)$ and
$A=A(\|F\|_{L^{\infty}(M,\omega)},M,\omega)$ ($A$ is given in Lemma
\ref{lem2.2}).
\end{lemma}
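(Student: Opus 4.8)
The plan is to compute $\tilde{\triangle}\bigl(e^{f_{p}(\phi)}(n+\triangle\phi)^{p}\bigr)$ directly by the product rule, using Lemma \ref{lem2.2} to control the term involving $\tilde{\triangle}(\triangle\phi)$ and absorbing the troublesome gradient term $|\tilde{\nabla}(\triangle\phi)|^{2}$ into the contribution coming from differentiating $e^{f_{p}(\phi)}$ twice. Concretely, I would first set $u = n+\triangle\phi$ and $h = e^{f_{p}(\phi)}$ and expand
\begin{equation*}
\tilde{\triangle}(hu^{p}) = h u^{p}\,\tilde{\triangle}(f_p(\phi)) + hu^p|\tilde\nabla f_p(\phi)|^2 + 2\,\mathrm{Re}\bigl(\tilde{g}^{k\bar l}(f_p(\phi))_k (u^p)_{\bar l}\bigr)h + h\,\tilde{\triangle}(u^p),
\end{equation*}
and then expand $\tilde{\triangle}(u^p) = p u^{p-1}\tilde{\triangle}u + p(p-1)u^{p-2}|\tilde\nabla u|^2$. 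The key algebraic point is that the cross term $2\,\mathrm{Re}(\tilde g^{k\bar l}(f_p(\phi))_k(u^p)_{\bar l}) = 2p u^{p-1}\,\mathrm{Re}(\tilde g^{k\bar l}(f_p(\phi))_k u_{\bar l})$ can be estimated by Cauchy--Schwarz against a multiple of $p(p-1)u^{p-2}|\tilde\nabla u|^2 + (\text{stuff})\cdot u^p|\tilde\nabla f_p(\phi)|^2$, so it never threatens the argument; the genuinely dangerous negative contribution is the $(\epsilon-1)p u^{p-1}|\tilde\nabla u|^2/u$ coming from Lemma \ref{lem2.2}.

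Next I would choose $f_p(\phi) = e^{-A(p+3)\phi}$ precisely so that its first derivatives produce the right coefficient. Since $(f_p(\phi))_i = -A(p+3)e^{-A(p+3)\phi}\phi_i$, we get $|\tilde\nabla f_p(\phi)|^2 = A^2(p+3)^2 e^{-2A(p+3)\phi}|\tilde\nabla\phi|^2_{\tilde g}$ and $\tilde{\triangle}(f_p(\phi)) = -A(p+3)e^{-A(p+3)\phi}\tilde{\triangle}\phi + A^2(p+3)^2 e^{-2A(p+3)\phi}|\tilde\nabla\phi|_{\tilde g}^2$. Plugging Lemma \ref{lem2.2} (with a fixed small $\epsilon$, e.g.\ $\epsilon=\tfrac12$) in for $\tilde{\triangle}(\triangle\phi)$ contributes $p h u^{p-1}[\triangle F - A(1+\tfrac1\epsilon)u(n-\tilde{\triangle}\phi)] - \tfrac12 p h u^{p-2}|\tilde\nabla u|^2$; meanwhile $hu^p\tilde{\triangle}(f_p(\phi))$ contributes $-A(p+3)h u^p\tilde{\triangle}\phi$, and since $\tilde{\triangle}\phi = n - \tilde{g}^{i\bar j}g_{i\bar j} = n - (n-\tilde{\triangle}\phi)$ — more usefully $-\tilde{\triangle}\phi = (n-\tilde{\triangle}\phi) - n$ — the term $-A(p+3)h u^p\tilde{\triangle}\phi = A(p+3)h u^p\bigl[(n-\tilde{\triangle}\phi) - n\bigr]$ produces a good positive multiple of $u^p(n-\tilde{\triangle}\phi)$ that dominates, for $p$ large, the bad $-pA(1+\tfrac1\epsilon)u^{p}(n-\tilde{\triangle}\phi)$ term from Lemma \ref{lem2.2}. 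I would also discard the manifestly nonnegative $hu^p|\tilde\nabla f_p(\phi)|^2$ and $p(p-1)hu^{p-2}|\tilde\nabla u|^2$ terms after they have been used to absorb the cross term. The upshot is a lower bound of the form
\begin{equation*}
\tilde{\triangle}(hu^p) \geq c(p)\,u^{p}(n-\tilde{\triangle}\phi) - C(p)u^p + p h u^{p-1}\triangle F
\end{equation*}
with $c(p)>0$.

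Finally, to convert $u^p(n-\tilde{\triangle}\phi)$ into the stated $u^{p+\frac{1}{n-1}}$, I would invoke the standard pointwise inequality relating $(n-\tilde{\triangle}\phi) = \mathrm{tr}_{\tilde g}g$ to $(n+\triangle\phi) = \mathrm{tr}_g\tilde g$ and $\tilde\omega^n/\omega^n = e^F$: by the arithmetic--geometric mean inequality applied to the eigenvalues of $g$ relative to $\tilde g$, one has $\mathrm{tr}_{\tilde g}g \geq \bigl(\tfrac{\omega^n}{\tilde\omega^n}\bigr)^{\frac{1}{n-1}}(\mathrm{tr}_g\tilde g)^{\frac{1}{n-1}} \geq c\,e^{-F/(n-1)}(n+\triangle\phi)^{\frac{1}{n-1}}$, so $u^p(n-\tilde{\triangle}\phi) \geq c(\|F\|_{L^\infty})\,u^{p+\frac{1}{n-1}}$. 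Absorbing the constants into $C_1(p), C_2(p)$ as in the statement finishes the proof. The main obstacle is bookkeeping: one must track how $c(p)$ and $C(p)$ depend on $p$ (they will be polynomial in $p$, which is all that matters for the later Moser iteration) and make sure the choice of $\epsilon$ and the largeness of $p$ needed to beat the Lemma \ref{lem2.2} term are compatible — in fact since $A$ is fixed and the good term carries coefficient $A(p+3)$ while the bad one carries $pA(1+\tfrac1\epsilon)$, taking $\epsilon$ a fixed constant like $\tfrac12$ already works for all $p\geq 1$ once one also uses $(n-\tilde\triangle\phi)\geq ne^{-F/n}>0$, so no largeness of $p$ is really required; the only care is that all discarded terms are genuinely $\geq 0$, which the sign choices above guarantee.
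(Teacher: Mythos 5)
Your overall strategy coincides with the paper's (expand $\tilde{\triangle}\bigl(e^{f_p(\phi)}(n+\triangle\phi)^p\bigr)$ by the product rule, Cauchy--Schwarz the mixed term, insert Lemma \ref{lem2.2}, use the $f_p'\tilde{\triangle}\phi$ term to beat the bad torsion term, and finish with $(n+\triangle\phi)\leq(n-\tilde{\triangle}\phi)^{n-1}e^{F}$), but there is a genuine gap at the one delicate point. Write $u=n+\triangle\phi$, $h=e^{f_p(\phi)}$. The mixed term is $2pf_p'hu^{p-1}\mathrm{Re}\bigl(\tilde g^{k\bar l}\phi_k u_{\bar l}\bigr)$, and the only nonnegative terms available to absorb it are $(f_p'^2+f_p'')hu^{p}|\tilde\nabla\phi|^{2}$ (from differentiating the weight twice) and $p(p-1)hu^{p-2}|\tilde\nabla u|^{2}$, plus at most $(1-\epsilon)p\,hu^{p-2}|\tilde\nabla u|^{2}$ if you keep the gradient term of Lemma \ref{lem2.2}. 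Even spending the \emph{entire} $|\tilde\nabla\phi|^{2}$ term, the sharp Cauchy--Schwarz forces a coefficient $\frac{p^{2}f_p'^{2}}{f_p'^{2}+f_p''}$ on the $|\tilde\nabla u|^{2}$ side, and the requirement $\frac{p^{2}f_p'^{2}}{f_p'^{2}+f_p''}\leq p(p-1)+(1-\epsilon)p$ is equivalent to $\epsilon\leq\frac{pf_p''}{f_p'^{2}+f_p''}=\frac{p}{e^{-A(p+3)\phi}+1}$, which is a pointwise, exponentially small quantity --- nowhere near your fixed $\epsilon=\tfrac12$. With $\epsilon=\tfrac12$ an uncontrolled term of size roughly $-\tfrac{p}{2}hu^{p-2}|\tilde\nabla(\triangle\phi)|^{2}$ survives, so it is not true that the cross term ``never threatens the argument''; this is exactly why the paper takes the pointwise choice $\epsilon=\frac{pf_p''}{f_p'^{2}+f_p''}$.

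Moreover, once $\epsilon$ is forced to be that small, $Ap\bigl(1+\tfrac1\epsilon\bigr)$ is of size $A\,e^{-A(p+3)\phi}$, and the reason it can still be dominated is that the good coefficient is $-f_p'=A(p+3)e^{-A(p+3)\phi}$, which carries the \emph{same} exponential factor: one has $(p+2)e^{-A(p+3)\phi}-p-1\geq1$ using $\sup_M\phi=-1$. In your comparison you wrote the good coefficient as $A(p+3)$, dropping the factor $e^{-A(p+3)\phi}$; this omission is precisely what makes the fixed-$\epsilon$ bookkeeping look harmless, and it hides the whole purpose of the double-exponential weight $f_p(\phi)=e^{-A(p+3)\phi}$ (a single exponential weight, which your accounting would equally justify, does not close the gradient absorption at all). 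A smaller slip: with $\epsilon<1$ the gradient term of Lemma \ref{lem2.2} enters with a favorable plus sign, not the minus sign you wrote, though that error is in the conservative direction. The remaining steps --- splitting $\tilde{\triangle}\phi=n-(n-\tilde{\triangle}\phi)$, discarding the genuinely nonnegative leftovers, and converting via $(n-\tilde{\triangle}\phi)\geq e^{-F/(n-1)}(n+\triangle\phi)^{\frac{1}{n-1}}$ --- are correct and agree with the paper; to repair the proof you must tie the Cauchy--Schwarz absorption to the pointwise choice of $\epsilon$ indicated above and then verify, as the paper does, that $-f_p'-Ap\bigl(1+\frac{f_p'^{2}+f_p''}{pf_p''}\bigr)\geq A$.
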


\begin{proof}
By direct calculation, we have
\begin{equation}\label{2.2}
\begin{split}
\tilde{\triangle}(e^{f_{p}(\phi)}(n+\triangle\phi)^{p})&=f'_{p}e^{f_{p}(\phi)}(\tilde{\triangle}\phi)(n+\triangle\phi)^p+(f_{p}'^{2}+f''_{p})e^{f_{p}(\phi)}|\tilde{\nabla}\phi|^{2}(n+\triangle\phi)^{p}\\[3pt]
&~~~~+pe^{f_{p}(\phi)}\tilde{\triangle}(\triangle\phi)(n+\triangle\phi)^{p-1}+p(p-1)e^{f_{p}(\phi)}|\tilde{\nabla}(\triangle\phi)|^{2}(n+\triangle\phi)^{p-2}\\[3pt]
&~~~~+2pf'_{p}e^{f_{p}(\phi)}(n+\triangle\phi)^{p-1}Re(\tilde{g}^{k\bar{l}}\phi_{k}(\triangle\phi)_{\bar{l}}).
\end{split}
\end{equation}
By the definition of $f_{p}(\phi)$, we have
\begin{equation}\label{2.3}
\left\{ \begin{array}{l}
f'_{p}(\phi)=-A(p+3)e^{-A(p+3)\phi}<0\\[6pt]
f''_{p}(\phi)=A^{2}(p+3)^{2}e^{-A(p+3)\phi}>0
\end{array}.\right.
\end{equation}
Thus, by the Cauchy-Schwarz inequality, we have
\begin{equation*}
2Re(\tilde{g}^{k\bar{l}}\phi_{k}(\triangle\phi)_{\bar{l}})\leq
\frac{(f_{p}'^{2}+f''_{p})(n+\triangle\phi)}{-pf'_{p}}|\tilde{\nabla}\phi|^{2}+\frac{-pf'_{p}}{({f_{p}'^{2}}+f_{p}'')(n+\triangle\phi)}|\tilde{\nabla}(\triangle\phi)|^{2},
\end{equation*}
which implies
\begin{equation}\label{2.4}
\begin{split}
2pf'_{p}e^{f_{p}(\phi)}(n+\triangle\phi)^{p-1}Re(\tilde{g}^{k\bar{l}}\phi_{k}(\triangle\phi)_{\bar{l}})
&\geq-(f_{p}'^{2}+f''_{p})e^{f_{p}(\phi)}|\tilde{\nabla}\phi|^{2}(n+\triangle\phi)^{p}\\[4pt]
&~~~~-\frac{p^{2}f_{p}'^{2}}{f_{p}'^{2}+f_{p}''}e^{f_{p}(\phi)}(n+\triangle\phi)^{p-2}|\tilde{\nabla}(\triangle\phi)|^{2}.
\end{split}
\end{equation}
Combining (\ref{2.2}) and (\ref{2.4}), we have
\begin{align*}
\tilde{\triangle}(e^{f_{p}(\phi)}(n+\triangle\phi)^{p})&\geq
f_{p}'e^{f_{p}(\phi)}(n+\triangle\phi)^p\tilde{\triangle}\phi
+pe^{f_{p}(\phi)}\tilde{\triangle}(\triangle\phi)(n+\triangle\phi)^{p-1}\\[6pt]
&~~~~+|\tilde{\nabla}(\triangle\phi)|^{2}(n+\triangle\phi)^{p-2}e^{f_{p}(\phi)}\left(p(p-1)-\frac{p^{2}f_{p}'^{2}}{f_{p}'^{2}+f_{p}''}\right)\\[6pt]
&\geq pe^{f_{p}(\phi)}(n+\triangle\phi)^{p-1}\left(\tilde{\triangle}(\triangle\phi)+\left( \frac{pf''_{p}}{(f'_{p})^{2}+f''_{p}}-1\right)\frac{|\tilde{\nabla}(\triangle\phi)|^{2}}{(n+\triangle\phi)}\right)\\[6pt]
&~~~~+f_{p}'e^{f_{p}(\phi)}(n+\triangle\phi)^p\tilde{\triangle}\phi.
\end{align*}
By Lemma \ref{lem2.2} (take
$\epsilon=\frac{pf''_{p}}{(f'_{p})^{2}+f''_{p}}$), we obtain,
\begin{equation}\label{2.5}
\begin{split}
\tilde{\triangle}(e^{f_{p}(\phi)}(n+\triangle\phi)^{p})&\geq
f_{p}'e^{f_{p}(\phi)}(n+\triangle\phi)^{p}\tilde{\triangle}\phi+pe^{f_{p}(\phi)}(n+\triangle\phi)^{p-1}\triangle F \\[5pt]
&~~~~-Ape^{f_{p}(\phi)}(n+\triangle\phi)^{p}(n-\tilde{\triangle}\phi)\left(1+\frac{(f'_{p})^{2}+f''_{p}}{pf''_{p}}\right)\\[5pt]
&= nf_{p}'e^{f_{p}(\phi)}(n+\triangle\phi)^{p}+pe^{f_{p}(\phi)}(n+\triangle\phi)^{p-1}\triangle F\\[5pt]
&~~~~+e^{f_{p}(\phi)}(n+\triangle\phi)^{p}(n-\tilde{\triangle}\phi)\left(-f'_{p}-Ap\left(1+\frac{(f'_{p})^{2}+f''_{p}}{pf''_{p}}\right)\right)\\[5pt]
&\geq nf_{p}'e^{f_{p}(\phi)}(n+\triangle\phi)^{p}+pe^{f_{p}(\phi)}(n+\triangle\phi)^{p-1}\triangle F\\[6pt]
&~~~~+Ae^{f_{p}(\phi)}(n+\triangle\phi)^{p}(n-\tilde{\triangle}\phi),
\end{split}
\end{equation}
where we have used $\sup_{M}\phi=-1$ and (\ref{2.3}). It is clear
that
\begin{equation*}
   tr_{g}\tilde{g}\leq(tr_{\tilde{g}}g)^{n-1}\frac{\det\tilde{g}}{\det
   g},
\end{equation*}
which implies
\begin{equation}\label{2.6}
(n+\triangle\phi)\leq (n-\tilde{\triangle}\phi)^{n-1}e^{F}.
\end{equation}
Combining with (\ref{2.5}) and (\ref{2.6}), we complete the proof.
\end{proof}

For convenience, we introduce a notation here, we define
\begin{equation}\label{u1}
u=e^{f_{1}(\phi)}(n+\triangle\phi).
\end{equation}
Thus, by Young's inequality and Lemma \ref{lem2.3}, we have
\begin{equation}\label{u2}
\tilde{\triangle}u\geq e^{f_{1}(\phi)}\triangle F-\tilde{C},
\end{equation}
where $\tilde{C}=\tilde{C}(\|F\|_{L^{\infty}(M,\omega)},M,\omega)$.

\section{The Laplacian estimate}
In this section, we remark that our constants may differ from line
to line.
\begin{lemma}\label{lem4.1}
Let $(M,\omega)$ be a compact Hermitian manifold. If $\phi$ is a
smooth solution of (\ref{CMAE}), then for any $f\in C^{\infty}(M)$,
we have
\begin{equation*}
    |\nabla f|^{2}\leq Cu|\tilde{\nabla} f|^{2},
\end{equation*}
where $u$ is defined by (\ref{u1}) and
$C=C(\|F\|_{L^{\infty}(M,\omega)},M,\omega)$.
\end{lemma}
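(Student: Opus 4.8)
The plan is to prove the pointwise estimate $|\nabla f|^2 \leq C u |\tilde\nabla f|^2$ by comparing the two Hermitian inner products on the cotangent space at a fixed point $x\in M$, using the elementary linear algebra that relates $g^{i\bar j}$ and $\tilde g^{i\bar j}$ through the eigenvalues of $\tilde g$ with respect to $g$. First I would fix a point and choose holomorphic normal coordinates for $\omega$ at that point so that $g_{i\bar j}=\delta_{ij}$ and $\tilde g_{i\bar j}=\lambda_i\delta_{ij}$ with $\lambda_i>0$ the eigenvalues of $\tilde g$ relative to $g$; this is possible since both $g$ and $\tilde g$ are positive Hermitian. In these coordinates $|\nabla f|^2=\sum_i |f_i|^2$ and $|\tilde\nabla f|^2=\sum_i \lambda_i^{-1}|f_i|^2$, so the claim reduces to the scalar inequality $\sum_i |f_i|^2 \leq Cu \sum_i \lambda_i^{-1}|f_i|^2$, which in turn follows if $\lambda_i \leq Cu$ for every $i$.

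The key step is therefore the bound $\max_i \lambda_i \leq C u$. Since $n+\triangle\phi = \operatorname{tr}_g\tilde g = \sum_i \lambda_i \geq \lambda_k$ for each $k$, and since all $\lambda_i$ are positive, we immediately get $\lambda_k \leq n+\triangle\phi$. Because $f_1(\phi)=e^{-4A\phi}$ is a bounded function of $\phi$ (here I use Theorem \ref{thm2.1} to bound $\|\phi\|_{L^\infty}$, hence $f_1(\phi)$, from above and below by constants depending only on $\|F\|_{L^\infty(M,\omega)}$, $M$, $\omega$), we have $e^{f_1(\phi)}\geq c_0>0$, so $u = e^{f_1(\phi)}(n+\triangle\phi) \geq c_0(n+\triangle\phi) \geq c_0\lambda_k$. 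Thus $\lambda_k \leq c_0^{-1} u$ for all $k$, which is exactly what is needed.

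Assembling these pieces: at the fixed point, in the chosen coordinates,
\begin{equation*}
|\nabla f|^2 = \sum_i |f_i|^2 = \sum_i \lambda_i \cdot \lambda_i^{-1}|f_i|^2 \leq \big(\max_i \lambda_i\big)\sum_i \lambda_i^{-1}|f_i|^2 \leq c_0^{-1} u\, |\tilde\nabla f|^2,
\end{equation*}
and since the point was arbitrary this gives the lemma with $C=c_0^{-1}$, a constant depending only on $\|F\|_{L^\infty(M,\omega)}$, $M$, $\omega$. I do not anticipate a genuine obstacle here; the only thing to be careful about is that the constant in the diagonalization step is universal (it is, since simultaneous diagonalization of a Hermitian form and a positive Hermitian form is exact, with no loss), and that the lower bound on $e^{f_1(\phi)}$ is uniform, which is where Theorem \ref{thm2.1} enters. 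No torsion or curvature of $\omega$ plays any role — this is purely the pointwise comparison of two metrics, one of which ($\tilde\omega$) has trace controlled by $u$.
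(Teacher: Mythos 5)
Your argument is correct and is essentially the paper's own proof: the eigenvalue computation you spell out is exactly the ``direct calculation'' giving the pointwise bound $|\nabla f|^{2}\leq(n+\triangle\phi)|\tilde{\nabla} f|^{2}$, after which the definition (\ref{u1}) of $u$ and Theorem \ref{thm2.1} (to control $e^{f_{1}(\phi)}$ from below) yield the stated constant, just as in the paper.
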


\begin{proof}
By direct calculation, we have
\begin{equation*}
    |\nabla f|^{2}\leq(n+\triangle\phi)|\tilde{\nabla} f|^{2}.
\end{equation*}
Combining with (\ref{u1}) and Theorem \ref{thm2.1}, we complete the
proof.
\end{proof}

\begin{lemma}\label{lem4.2}
Under the assumptions of Theorem \ref{mainthm2}, for any $p\geq0$,
we have
\begin{equation*}
\int_{M}|\nabla(u^{\frac{p}{2}})|^{2}\omega^{n}\leq
C(p^{2}+1)\int_{M} u^{p}(1+|\nabla F|^{2})\omega^{n}+Cp\int_{M}
u^{p}|\nabla\phi||\nabla F|\omega^{n}+C\int_{M} u^{p+1}\omega^{n},
\end{equation*}
where $u$ is defined by (\ref{u1}) and
$C=C(\|F\|_{L^{\infty}(M,\omega)},M,\omega)$.
\end{lemma}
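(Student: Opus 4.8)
The plan is to multiply the differential inequality \eqref{u2} (equivalently, Lemma~\ref{lem2.3} with $p=1$) by the test function $u^{p-1}$ and integrate against $\omega^n$, using integration by parts to convert the $\tilde\triangle u$ term into a gradient integral. The first step is to make precise the integration-by-parts identity on the Hermitian manifold: since $\tilde\triangle u = \tilde g^{i\bar j} u_{i\bar j}$ with $\tilde g^{i\bar j}$ not divergence-free in general, integrating $u^{p-1}\tilde\triangle u$ will produce a term $\int_M u^{p-1}\tilde g^{i\bar j}_{,i} u_{\bar j}\,\omega^n$ (a torsion term) in addition to the main term $-(p-1)\int_M u^{p-2}\tilde g^{i\bar j}u_i u_{\bar j}\,\omega^n = -(p-1)\int_M u^{p-2}|\tilde\nabla u|^2\,\omega^n$. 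The derivatives of $\tilde g^{i\bar j}$ can be bounded in terms of $\nabla^3\phi$, which is uncontrolled; so I expect the cleaner route is to rewrite $\tilde g^{i\bar j}u_{i\bar j}\,\omega^n$ as $n\,\sqrt{-1}\partial\bar\partial u\wedge\tilde\omega^{n-1}/(n-1)!\cdot(\text{normalization})$ divided by $\tilde\omega^n/\omega^n = e^F$, and then integrate by parts at the level of forms: $\int_M u^{p-1}\sqrt{-1}\partial\bar\partial u\wedge e^{-F}\tilde\omega^{n-1} = -(p-1)\int_M u^{p-2}\sqrt{-1}\partial u\wedge\bar\partial u\wedge e^{-F}\tilde\omega^{n-1} + (\text{terms with }\partial\bar\partial(e^{-F}\tilde\omega^{n-1}))$. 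The form $\sqrt{-1}\partial u\wedge\bar\partial u\wedge\tilde\omega^{n-1}$ is comparable to $|\tilde\nabla u|^2\,\tilde\omega^n \simeq |\tilde\nabla u|^2 e^F\omega^n$, and by Lemma~\ref{lem4.1} (applied to $f$ with $u|\tilde\nabla f|^2$, actually applied in the reverse comparison $|\tilde\nabla u|^2 \geq C^{-1}|\nabla u|^2/u$ — wait, Lemma~\ref{lem4.1} gives $|\nabla f|^2\leq Cu|\tilde\nabla f|^2$, i.e. $|\tilde\nabla u|^2\geq C^{-1}u^{-1}|\nabla u|^2$) this controls $\int_M u^{p-2}|\tilde\nabla u|^2\omega^n$ from below by $C^{-1}\int_M u^{p-3}|\nabla u|^2\omega^n$; rescaling $u^{p-3}|\nabla u|^2 = \tfrac{4}{(p-1)^2}|\nabla(u^{(p-1)/2})|^2$ after the substitution gives the left-hand side $\int_M|\nabla(u^{p/2})|^2\omega^n$ up to adjusting exponents (one sets the test power so that $u^{p/2}$ appears; the bookkeeping of which literal power of $u$ multiplies $\tilde\triangle u$ is routine but must be done carefully).

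The second step is to estimate the remaining terms on the right. The $\partial\bar\partial(e^{-F}\tilde\omega^{n-1})$ term splits, by the product rule, into a piece involving $\partial\bar\partial(\tilde\omega^{n-1})$ — which is exactly where condition~$(*)$ (or $n=2$) enters: it bounds $\sqrt{-1}\partial\bar\partial\tilde\omega^{n-1}$ by $C\omega^n$, and $\|\phi\|_{L^\infty}$ is bounded by Theorem~\ref{thm2.1} so $\Lambda_1$ is available, while $\Lambda_2$ comes from $\tilde\omega^n = e^F\omega^n$ with $F$ bounded — contributing a harmless $\int_M u^{p-1}\cdot u\,\omega^n = \int_M u^p\omega^n$-type term (note: one power of $u$ is hidden because after integrating by parts the "$u^{p-1}\cdot\partial\bar\partial(\text{form})$" term has one fewer $u$-derivative, so combined with the factor $u$ from $n+\triangle\phi$ one gets $u^{p-1}$ against a bounded form, i.e. $\int u^{p-1}\omega^n \le \int u^p\omega^n + C$, absorbable into the $\int u^{p+1}$ term for $p\ge 0$) — and pieces involving $\partial F$, $\bar\partial F$ and $\partial\bar\partial$ hitting only $e^{-F}$, which produce $\int_M u^{p-1}|\nabla u||\nabla F|\,\omega^n$ and $\int_M u^{p-1}(|\nabla F|^2+|\triangle F|)\,\omega^n$-type terms. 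The term with $\triangle F$ is the one already present explicitly in \eqref{u2}: I move $\int_M u^{p-1}e^{f_1(\phi)}\triangle F\,\omega^n$ to the right, integrate by parts once more to trade $\triangle F$ for $\nabla F\cdot\nabla(u^{p-1}e^{f_1})$, producing $(p-1)\int_M u^{p-2}|\nabla u||\nabla F|\,\omega^n + \int_M u^{p-1}|\nabla\phi||\nabla F|\,\omega^n$ (the second from $f_1'(\phi)\nabla\phi$) plus a torsion term bounded by $\int u^{p-1}|\nabla F|\,\omega^n\le \tfrac12\int u^p\omega^n + \tfrac12\int u^{p-2}|\nabla F|^2\omega^n$. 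Finally the cross term $\int_M u^{p-2}|\nabla u||\nabla F|\,\omega^n$ is handled by Cauchy–Schwarz with a small parameter: $\le \delta\int_M u^{p-3}|\nabla u|^2\omega^n + \delta^{-1}\int_M u^{p-1}|\nabla F|^2\omega^n$, and the first summand is $\tfrac{4\delta}{(p-1)^2}\int|\nabla(u^{(p-1)/2})|^2$ — absorbed into the left-hand side for $\delta$ small (this is why the final constant carries a factor $p^2+1$, and why the $|\nabla F|^2$ terms come multiplied by $p^2+1$ after dividing through).

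The main obstacle I anticipate is the torsion bookkeeping in the integration by parts: on a non-Kähler manifold $\partial\bar\partial\omega^{n-1}\ne 0$, so every integration by parts of a form against $\tilde\omega^{n-1}$ generates boundary-free but nonzero correction terms, and one must check that each such correction is either (i) a lower-order term in $u$ bounded by the stated right-hand side using only $\|\phi\|_{L^\infty}$, $\|F\|_{L^\infty}$ and condition~$(*)$, or (ii) absorbable into $\int_M|\nabla(u^{p/2})|^2\omega^n$ after Cauchy–Schwarz. In particular one must be careful that no term involving $\nabla^2 F$, $\nabla^3\phi$, or an uncontrolled power of $u$ survives; the key points are that condition~$(*)$ precisely bounds the worst second-derivative-of-metric term $\partial\bar\partial\tilde\omega^{n-1}$, that all $\phi$-dependence beyond $\triangle\phi$ enters only through $f_1(\phi)=e^{-A(\cdot+3)\phi}$ and its derivatives (bounded by Theorem~\ref{thm2.1}), and that the $\tilde g^{i\bar j}$-contraction against a bounded-form-valued expression is controlled by $tr_{\tilde g}\,\omega = n - \tilde\triangle\phi$, which by \eqref{2.6} is itself bounded by a power of $u$ — one should double-check this does not secretly raise the $u$-power above $p+1$; since \eqref{2.6} reads $(n+\triangle\phi)\le (n-\tilde\triangle\phi)^{n-1}e^F$ it actually goes the wrong way, so the correct move is to use instead $n-\tilde\triangle\phi = tr_{\tilde g}g \ge n e^{-F/n}$ only as a lower bound and bound $tr_{\tilde g}g$ from above pointwise by the cofactor expansion $tr_{\tilde g}g \le (tr_g\tilde g)^{n-1}e^{-F} = (n+\triangle\phi)^{n-1}e^{-F}$, which would raise powers — hence one arranges the integration by parts so that $n-\tilde\triangle\phi$ never appears with a positive power multiplying a positive power of $u$ but only multiplying $\triangle F$ or torsion, exactly as in Lemma~\ref{lem2.3}'s own estimate where the $(n-\tilde\triangle\phi)$ factor is ultimately killed by \eqref{2.6}. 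Once the exponent bookkeeping is organized so that the only surviving powers are $u^p$, $u^{p+1}$, $u^{p-2}|\nabla F|^2$, $u^{p-1}|\nabla\phi||\nabla F|$ and $|\nabla(u^{p/2})|^2$, collecting constants gives the claimed inequality.
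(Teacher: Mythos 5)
Your overall strategy is the paper's: bound $\int|\nabla(u^{p/2})|^2\omega^n$ by $\int u|\tilde\nabla(u^{p/2})|^2\tilde\omega^n$ via Lemma \ref{lem4.1}, integrate by parts at the level of forms against $\tilde\omega^{n-1}$ so that condition $(*)$ controls the $\partial\bar\partial\tilde\omega^{n-1}$ correction, insert \eqref{u2}, convert the remaining $\triangle F$ term into $|\nabla F|$ terms by one more integration by parts against $\omega^{n-1}$ (with the bounded torsion form $\partial\omega^{n-1}$), and absorb the gradient cross terms by Cauchy--Schwarz; the power bookkeeping issue you flag (test with $u^{p}$, not $u^{p-1}$, so that $u^{p-1}|\tilde\nabla u|^2$ appears) is indeed only a presentational slip.

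However, one step of your plan would fail as written: the $e^{-F}$-twisted weight in the first integration by parts. If you pair $u^{q}\,\tilde\triangle u\,\omega^{n}$ with the form $e^{-F}\tilde\omega^{n-1}$, the correction terms are not only $\partial\bar\partial\tilde\omega^{n-1}$ (good, condition $(*)$) but also pieces in which $\partial F$, $\bar\partial F$, or $\partial\bar\partial e^{-F}$ are wedged against $\tilde\omega^{n-1}$ (or $\bar\partial u\wedge\partial F\wedge\tilde\omega^{n-1}$ cross terms). These are contractions of derivatives of $F$ and $u$ with the \emph{unknown} metric $\tilde g$, and your claim that they yield only $\int u^{q}|\nabla u||\nabla F|$, $\int u^{q}(|\nabla F|^{2}+|\triangle F|)$ terms is unjustified: converting $|\tilde\nabla F|$, $\tilde\triangle(e^{-F})$ to the fixed-metric quantities costs a factor $\mathrm{tr}_{\tilde g}g=n-\tilde\triangle\phi$, which by \eqref{2.6} is only bounded by a multiple of $u^{n-1}$ (similarly $\bar\partial\tilde\omega^{n-1}$ costs powers of $\mathrm{tr}_{g}\tilde g$), so one ends up with terms like $u^{q+n-1}|\nabla F|^{2}$ that exceed the admissible right-hand side and would wreck the exponent bookkeeping in Lemma \ref{lem4.4} and Theorem \ref{thm4.3}; moreover a surviving $|\triangle F|$ term is not allowed at all, and integrating it by parts against the $\tilde\omega^{n-1}$-weighted measure regenerates the same bad cross terms. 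The fix is exactly the paper's normalization, which your own second step already anticipates: test \eqref{u2} with $u^{p}$ against $\tilde\omega^{n}$ (no $e^{-F}$ twist), so that the only integration by parts against $\tilde\omega^{n-1}$ involves $u$ alone, namely $\sqrt{-1}\int\partial u^{p}\wedge\bar\partial u\wedge\tilde\omega^{n-1}=-\sqrt{-1}\int u^{p}\partial\bar\partial u\wedge\tilde\omega^{n-1}+\tfrac{1}{p+1}\sqrt{-1}\int\bar\partial u^{p+1}\wedge\partial\tilde\omega^{n-1}$, where the last term becomes $\int u^{p+1}\partial\bar\partial\tilde\omega^{n-1}$ and is handled by condition $(*)$; the only second derivative of $F$ is then the explicit $e^{f_{1}(\phi)}\triangle F$ from \eqref{u2}, written as $e^{-F}\bigl(\triangle(e^{F})-e^{F}|\nabla F|^{2}\bigr)$ and integrated by parts against the \emph{fixed} $\omega^{n-1}$, which produces exactly the $u^{p}|\nabla\phi||\nabla F|$, $p^{2}u^{p}|\nabla F|^{2}$, $u^{p}|\nabla F|$ terms and the absorbable $\tfrac12\int|\nabla(u^{p/2})|^{2}$, with no factors of $\mathrm{tr}_{\tilde g}g$ ever appearing.
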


\begin{proof}
By Lemma \ref{lem4.1} and direct calculation, we have

\begin{align*}
\int_{M}|\nabla(u^{\frac{p}{2}})|^{2}\omega^{n}&\leq
C_{1}\int_{M} u|\tilde{\nabla}(u^{\frac{p}{2}})|^{2}\tilde{\omega}^{n}\\
&=C_{1}p\sqrt{-1}\int_{M}\partial
u^{p}\wedge\bar{\partial}u\wedge\tilde{\omega}^{n-1}\\
&=-C_{1}p\sqrt{-1}\int_{M}
u^{p}\partial\bar{\partial}u\wedge\tilde{\omega}^{n-1}+\frac{C_{1}p}{p+1}\sqrt{-1}\int_{M}\bar{\partial}u^{p+1}\wedge\partial\tilde{\omega}^{n-1}\\
&=-C_{1}p\int_{M}
u^{p}(\tilde{\triangle}u)\tilde{\omega}^{n}-\frac{C_{1}p}{p+1}\sqrt{-1}\int_{M}
u^{p+1}\partial\bar{\partial}\tilde{\omega}^{n-1},
\end{align*}
where $C_{1}=C_{1}(\|F\|_{L^{\infty}(M,\omega)},M,\omega)$. Since
$M$ satisfies condition $(*)$ (when $n=2$, all Hermitian manifolds
satisfy the condition $(*)$), we have
\begin{equation*}
-\frac{C_{1}p}{p+1}\sqrt{-1}\int_{M}
u^{p+1}\partial\bar{\partial}\tilde{\omega}^{n-1}\leq C_{2}\int_{M}
u^{p+1}\omega^{n},
\end{equation*}
where $C_{2}=C_{2}(\|F\|_{L^{\infty}(M,\omega)},M,\omega)$. By
(\ref{u2}) and $\tilde{\omega}^{n}=e^{F}\omega^{n}$, we compute
\begin{align*}
-C_{1}p\int_{M} u^{p}(\tilde{\triangle}u)\tilde{\omega}^{n}&\leq C_{3}p\int_{M} u^{p}\left(\tilde{C}-e^{f_{1}(\phi)}\triangle F\right)\tilde{\omega}^{n}\\
&\leq C_{3}p\int_{M} u^{p}\tilde{\omega}^{n}-C_{3}p\int_{M}
e^{f_{1}(\phi)}u^{p}\left(\triangle(e^{F})-e^{F}|\nabla F|^{2}\right)\omega^{n}\\
&\leq C_{4}p\int_{M} u^{p}(1+|\nabla
F|^{2})\omega^{n}+C_{3}p\int_{M}\nabla(e^{f_{1}(\phi)}u^{p})\nabla(e^{F})\omega^{n}\\
&~~~~-\sqrt{-1}C_{3}p\int_{M} e^{f_{1}(\phi)}u^{p}\bar{\partial}
e^{F}\wedge\partial\omega^{n-1},
\end{align*}
where $C_{3}=C_{3}(\|F\|_{L^{\infty}(M,\omega)},M,\omega)$ and
$C_{4}=C_{4}(\|F\|_{L^{\infty}(M,\omega)},M,\omega)$. It is clear
that
\begin{align*}
C_{3}p\int_{M}\nabla(e^{f_{1}(\phi)}u^{p})\nabla(e^{F})\omega^{n}&=
C_{3}p\int_{M}\nabla(e^{f_{1}(\phi)})u^{p}\nabla(e^{F})\omega^{n}+C_{3}p\int_{M} e^{f_{1}(\phi)}\nabla(u^{p})\nabla(e^{F})\omega^{n}\\
&\leq C_{5}p\int_{M} u^{p}|\nabla F||\nabla
\phi|\omega^{n}+\frac{1}{2}\int_{M}|\nabla
u^{\frac{p}{2}}|^{2}\omega^{n}+C_{5}p^{2}\int_{M} u^{p}|\nabla
F|^{2}\omega^{n},
\end{align*}
where $C_{5}=C_{5}(\|F\|_{L^{\infty}(M,\omega)},M,\omega)$. Here we
have used the Cauchy-Schwarz inequality. We notice that
\begin{equation*}
-\sqrt{-1}C_{3}p\int_{M} e^{f_{1}(\phi)}u^{p}\bar{\partial}
e^{F}\wedge\partial\omega^{n-1}\leq C_{6}p\int_{M} u^{p}|\nabla
F|\omega^{n},
\end{equation*}
where $C_{6}=C_{6}(\|F\|_{L^{\infty}(M,\omega)},M,\omega)$.
Combining the above inequalities, we complete the proof.
\end{proof}

\begin{theorem}\label{thm4.3}
Under the assumptions of Theorem \ref{mainthm2}, we have
$$\|u\|_{L^{\infty}(M,\omega)}\leq C(\|u\|_{L^{\frac{q_{0}}{2}}(M,\omega)},\|F\|_{W^{1,q_{0}}(M,\omega)},q_{0},M,\omega).$$
\end{theorem}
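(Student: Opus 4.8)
The plan is to run Moser iteration on the differential inequality for $u$, using Lemma \ref{lem4.2} as the basic energy estimate and feeding in the Sobolev inequality on $(M,\omega)$ to upgrade integrability at each step. First I would fix notation: let $q_0>2n$ and write $\chi = \frac{n}{n-1}>1$ (the Sobolev exponent gain in complex dimension $n$, i.e. $W^{1,2}\hookrightarrow L^{2\chi}$). Starting from Lemma \ref{lem4.2}, the right-hand side contains the three terms $\int u^p(1+|\nabla F|^2)$, $\int u^p|\nabla\phi||\nabla F|$, and $\int u^{p+1}$. The first two involve $F$ only through $\|F\|_{W^{1,q_0}}$; the third is the genuinely nonlinear term. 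For the $|\nabla F|^2$ terms I would apply Hölder with exponents $\frac{q_0}{2}$ and its conjugate to get $\int u^p|\nabla F|^2 \le \|F\|_{W^{1,q_0}}^2 \,\|u\|_{L^{p q_0/(q_0-2)}}^{p}$; since $q_0>2n\ge 4$, the conjugate exponent $\frac{q_0}{q_0-2}$ is strictly less than $\chi=\frac{n}{n-1}$ when... actually one must check $\frac{q_0}{q_0-2}<\chi \iff q_0 > 2n$, which is exactly the hypothesis — this is the reason the threshold $q_0>2n$ appears. For the $|\nabla\phi||\nabla F|$ term I would use Cauchy–Schwarz to split off $|\nabla\phi|^2$, which is controlled by $u$ itself up to a constant (since $|\nabla\phi|^2 \le (n+\triangle\phi)\,|\omega|^2_{\text{pointwise}} \le C u$, cf. the proof of Lemma \ref{lem4.1}), absorbing it into the $\int u^{p+1}$ term, and bound $|\nabla F|^2$ as before.

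Next I would set up the iteration itself. Write $\|\nabla u^{p/2}\|_{L^2}^2 \ge c\|u^{p/2}\|_{L^{2\chi}}^2 - C\|u^{p/2}\|_{L^2}^2$ from the Sobolev inequality, so that Lemma \ref{lem4.2} combined with the Hölder estimates above yields, for every $p\ge 2$ say,
\begin{equation*}
\Big(\int_M u^{p\chi}\,\omega^n\Big)^{1/\chi} \le C(p^2+1)\Big(1+\|F\|_{W^{1,q_0}}^2\Big)\Big(\int_M u^{p\,\frac{q_0}{q_0-2}}\,\omega^n\Big)^{(q_0-2)/q_0} + C(p^2+1)\int_M u^{p+1}\,\omega^n.
\end{equation*}
Because both $\frac{q_0}{q_0-2}<\chi$ and (after noting $u^{p+1}=u^{p}\cdot u$ and applying Hölder in the second term, or more simply by Young's inequality $u^{p+1}\le \varepsilon u^{p\chi/(\chi-\delta)} + C_\varepsilon u^{\cdots}$) the exponents on the right are strictly below $p\chi$, one can interpolate: every right-hand-side norm $\|u\|_{L^{pr}}$ with $r<\chi$ is bounded, via the interpolation inequality $\|u\|_{L^{pr}}\le \|u\|_{L^{p}}^{\theta}\|u\|_{L^{p\chi}}^{1-\theta}$ together with Young's inequality, by $\frac12\|u\|_{L^{p\chi}}^{p} + C(p)\|u\|_{L^p}^{p}$. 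Absorbing the half into the left gives the clean iteration inequality
\begin{equation*}
\|u\|_{L^{p\chi}(M,\omega)} \le \big(C(p^2+1)(1+\|F\|_{W^{1,q_0}}^2)\big)^{1/p}\,\|u\|_{L^{p}(M,\omega)} + (\text{lower order, absorbed}),
\end{equation*}
i.e. $\|u\|_{L^{p\chi}} \le (Cp^2)^{1/p}\max(1,\|u\|_{L^p})$ schematically.

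Finally I would iterate starting from $p_0=q_0/2$ (which is where the base integrability $\|u\|_{L^{q_0/2}}$ enters the statement), setting $p_{k+1}=\chi p_k$, so $p_k = \chi^k p_0\to\infty$. Taking logarithms, $\log\|u\|_{L^{p_{k+1}}} \le \log\|u\|_{L^{p_k}} + \frac{C\log(C p_k^2)}{p_k}$, and since $\sum_k \frac{\log(Cp_k^2)}{p_k} = \sum_k \frac{2k\log\chi + \log(Cp_0^2)}{\chi^k p_0} < \infty$ converges geometrically, we conclude
\begin{equation*}
\|u\|_{L^\infty(M,\omega)} = \lim_{k\to\infty}\|u\|_{L^{p_k}(M,\omega)} \le C\,\max\big(1,\|u\|_{L^{q_0/2}(M,\omega)}\big),
\end{equation*}
with $C$ depending only on $q_0$, $\|F\|_{W^{1,q_0}}$, $M$, $\omega$, which is the assertion. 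The main obstacle — and the one place the hypotheses are genuinely used — is the bookkeeping that keeps all exponents appearing on the right-hand side strictly below $p\chi$: this forces $q_0>2n$ (so that $\frac{q_0}{q_0-2}<\frac{n}{n-1}$) and requires carefully pairing each "bad" term with the right Hölder/interpolation exponent and tracking the polynomial-in-$p$ constants so that the iterated series still converges. The condition $(*)$ has already been consumed in Lemma \ref{lem4.2}, so it plays no further role here; everything in this theorem is a deterministic Moser iteration once that lemma is in hand.
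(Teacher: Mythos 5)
Your iteration skeleton (Sobolev inequality plus Lemma \ref{lem4.2}, H\"older with exponent $\tfrac{q_0}{q_0-2}$ against $|\nabla F|^2$, interpolation/absorption of the sub-critical exponents, and the observation that $\tfrac{q_0}{q_0-2}<\tfrac{n}{n-1}$ is exactly $q_0>2n$) is the same Moser scheme the paper runs, and those parts are fine. The genuine gap is your treatment of the mixed term $\int_M u^{p}|\nabla\phi||\nabla F|\,\omega^n$: you dispose of $|\nabla\phi|^2$ by the claimed pointwise bound $|\nabla\phi|^2\le C(n+\triangle\phi)\le Cu$, citing the proof of Lemma \ref{lem4.1}. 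But Lemma \ref{lem4.1} gives $|\nabla\phi|^2\le Cu\,|\tilde{\nabla}\phi|^2$, and the factor you are suppressing, $|\tilde{\nabla}\phi|^2=\tilde g^{i\bar j}\phi_i\phi_{\bar j}$, has no a priori bound at this stage: an inequality of the form $|\nabla\phi|^2\le C(n+\triangle\phi)$, with only $\|\phi\|_{L^\infty}$ and the equation known, is a gradient estimate in disguise, and the entire point of the paper's method is that no gradient estimate is available or used. Pointwise nothing forces $|\nabla\phi|$ to be small where $n+\triangle\phi$ is small (near a point where the eigenvalues of $\tilde g$ are close to degenerate in the trace, $n+\triangle\phi$ can be of size $\varepsilon$ while $|\nabla\phi|$ is of size $1$), so the step ``absorb $u^p|\nabla\phi||\nabla F|$ into $\int u^{p+1}$'' is unjustified, and with it your clean iteration inequality.

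The paper closes exactly this hole differently: it keeps the term and applies H\"older with three exponents, $\int_M u^p|\nabla\phi||\nabla F|\,\omega^n\le \big(\int_M u^{pr_0}\omega^n\big)^{1/r_0}\|\nabla\phi\|_{L^{q_0}(M,\omega)}\|\nabla F\|_{L^{q_0}(M,\omega)}$ with $\tfrac1{r_0}+\tfrac2{q_0}=1$, and then bounds $\|\nabla\phi\|_{L^{q_0}(M,\omega)}\le C\|u\|_{L^{q_0/2}(M,\omega)}+C$ by Lemma \ref{lem3.6}, i.e.\ by Calder\'on--Zygmund $L^p$ estimates and Sobolev embedding on the underlying Riemannian manifold. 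This is precisely where the dependence on $\|u\|_{L^{q_0/2}(M,\omega)}$ in the constant of Theorem \ref{thm4.3} comes from, not merely from the starting exponent of the iteration. If you replace your pointwise claim by this H\"older step together with Lemma \ref{lem3.6} (or Lemma \ref{lem3.7}), the rest of your argument goes through and becomes essentially the paper's proof, up to the cosmetic difference that the paper iterates with ratio $\beta/r_0$ and tracks the exponents $b_k=(p_k+1)/p_k$ multiplicatively rather than absorbing the sub-critical terms by interpolation.
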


\begin{proof}
Without loss of generality, we can assume $q_{0}<\infty$. We use the
iteration method (see \cite{Mo}). By the Sobolev inequality
(Corollary \ref{coro3.2}) and Lemma \ref{lem4.1}, for $p\geq1$, we
have
\begin{align*}
\left(\int_{M} u^{p\beta}\omega^{n}\right)^{\frac{1}{\beta}} &\leq
C_{1}\int_{M}
u^{p}\omega^{n}+C_{1}\int_{M}|\nabla(u^{\frac{p}{2}})|^{2}\omega^{n}\\
&\leq C_{1}\int_{M} u^{p}\omega^{n}+ C_{1}p^{2}\int_{M}
u^{p}(1+|\nabla F|^{2})\omega^{n}\\
&~~~~+C_{1}p\int_{M} u^{p}|\nabla\phi||\nabla
F|\omega^{n}+C_{1}\int_{M}
u^{p+1}\omega^{n}\\
&\leq C_{1}p^{2}\int_{M} u^{p+1}\omega^{n}+C_{1}p^{2}\int_{M}
u^{p}|\nabla F|^{2}\omega^{n}+C_{1}p^{2}\int_{M}
u^{p}|\nabla\phi||\nabla F|\omega^{n},
\end{align*}
where $C_{1}=C_{1}(\|F\|_{L^{\infty}(M,\omega)},M,\omega)$. Here we
have used Young's inequality and $p\leq p^{2}$. By the H\"{o}lder
inequality, we have
$$\int_{M} u^{p}|\nabla F|^{2}\omega^{n}\leq(\int_{M} u^{pr_{0}}\omega^{n})^{\frac{1}{r_{0}}}(\int_{M} |\nabla F|^{q_{0}}\omega^{n})^{\frac{2}{q_{0}}}$$
and
$$\int_{M} u^{p}|\nabla\phi||\nabla F|\omega^{n}\leq(\int_{M} u^{pr_{0}}\omega^{n})^{\frac{1}{r_{0}}}(\int_{M}|\nabla\phi|^{q_{0}}\omega^{n})^{\frac{1}{q_{0}}}(\int_{M}|\nabla F|^{q_{0}}\omega^{n})^{\frac{1}{q_{0}}},$$
where $\frac{1}{r_{0}}+\frac{2}{q_{0}}=1$. Combining the above
inequalities, when $pr_{0}\geq p+1$ (that is, $p\geq
\frac{q_{0}-2}{2})$, we obtain
\begin{align*}
\|u\|_{L^{p\beta}(M,\omega)}
&\leq\left(C_{2}p^{2}(\|\nabla\phi\|_{L^{q_{0}}(M,\omega)}+1)\right)^{\frac{1}{p}}\left(
\|u\|_{L^{p+1}(M,\omega)}^{\frac{p+1}{p}}+\|u\|_{L^{pr_{0}}(M,\omega)}\right)\\
&\leq\left(C_{2}p^{2}(\|\nabla\phi\|_{L^{q_{0}}(M,\omega)}+1)\right)^{\frac{1}{p}}\|u\|_{L^{pr_{0}}(M,\omega)}^{\frac{p+1}{p}},
\end{align*}
where $C_{2}=C_{2}(\|F\|_{W^{1,q_{0}}(M,\omega)},q_{0},M,\omega)$.
By Lemma \ref{lem3.6}, we have
\begin{equation*}
\begin{split}
\|\nabla\phi\|_{L^{q_{0}}(M,\omega)}&\leq
C_{3}\|u\|_{L^{\frac{2nq_{0}}{2n+q_{0}}}(M,\omega)}+C_{3}\\
&\leq C_{3}\|u\|_{L^{\frac{q_{0}}{2}}(M,\omega)}+C_{3},
\end{split}
\end{equation*}
where $C_{3}=C_{3}(q_{0},\|F\|_{\infty},M,\omega)$. Thus, for any
$k\geq0$, we have
\begin{equation}\label{4.1}
\|u\|_{L^{p_{k}\beta}(M,\omega)}\leq
a_{k}\|u\|_{L^{p_{k}r_{0}}(M,\omega)}^{b_{k}},
\end{equation}
where
$$a_{k}=\left(C_{4}p_{k}^{2}(\|u\|_{L^{\frac{q_{0}}{2}}(M,\omega)}+1)\right)^{\frac{1}{p_{k}}},C_{4}=C_{4}(\|F\|_{W{1,q_{0}}(M,\omega)},q_{0},M,\omega)$$
$$b_{k}=\frac{p_{k}+1}{p_{k}}~~and~~p_{k}=\frac{q_{0}-2}{2}(\frac{\beta}{r_{0}})^{k}.$$
By (\ref{4.1}), we have
\begin{equation}\label{4.2}
\|u\|_{L^{p_{k}\beta}(M,\omega)}\leq a_{k}a_{k-1}^{b_{k}}\cdots
a_{0}^{b_{k}\cdots
b_{1}}\|u\|_{L^{p_{0}r_{0}}(M,\omega)}^{b_{k}\cdots b_{0}}.
\end{equation}
Without loss of generality, we can assume that $a_{k}\geq1$, for
$k\geq0$. We observe that $\prod_{i=0}^{\infty}b_{k}$ and
$\prod_{i=0}^{\infty}a_{k}$ are convergent. In (\ref{4.2}), let
$k\rightarrow\infty$, we obtain
$$\|u\|_{L^{\infty}(M,\omega)}\leq C(\|u\|_{L^{\frac{q_{0}}{2}}(M,\omega)},\|F\|_{W^{1,q_{0}}(M,\omega)},q_{0},M,\omega).$$
\end{proof}

\begin{lemma}\label{lem4.4}
Under the assumptions of Theorem \ref{mainthm2}, for any $p\geq1$,
we have
$$\int_{M} u^{p+\frac{1}{n-1}}\omega^{n}\leq C(p)\int_{M} u^{p-1}|\nabla\phi||\nabla F|\omega^{n}+C(p)\int_{M} u^{p-1}|\nabla F|^{2}\omega^{n}+C(p),$$
where $C(p)=C(p,\|F\|_{L^{\infty}(M,\omega)},M,\omega)$.
\end{lemma}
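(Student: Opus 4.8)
The plan is to integrate a slight strengthening of Lemma~\ref{lem2.3} over $M$ against the measure $\tilde{\omega}^{n}=e^{F}\omega^{n}$ and then solve for $\int_{M}u^{p+\frac{1}{n-1}}\omega^{n}$. Put $v_{p}=e^{f_{p}(\phi)}(n+\triangle\phi)^{p}$; by Theorem~\ref{thm2.1} the weights $e^{f_{p}(\phi)}$ and $e^{pf_{1}(\phi)}$ are pinched between two positive constants depending on $p$, so $v_{p}$, $u^{p}$, $(n+\triangle\phi)^{p}$ are mutually comparable up to $C(p)$, and likewise $(n+\triangle\phi)^{p+\frac{1}{n-1}}\geq c(p)\,u^{p+\frac{1}{n-1}}$. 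First I would revisit the proof of Lemma~\ref{lem2.3}: there the coefficient $p(p-1)-\frac{p^{2}(f_{p}')^{2}}{(f_{p}')^{2}+f_{p}''}$ is matched exactly by invoking Lemma~\ref{lem2.2} with $\epsilon=\frac{pf_{p}''}{(f_{p}')^{2}+f_{p}''}$, which annihilates the $|\tilde{\nabla}(\triangle\phi)|^{2}$ term. Instead I would invoke Lemma~\ref{lem2.2} with $\epsilon=\frac{3}{4}\cdot\frac{pf_{p}''}{(f_{p}')^{2}+f_{p}''}$. A short computation (using $f_{p}'(\phi)=-A(p+3)e^{-A(p+3)\phi}$, $(f_{p}')^{2}/f_{p}''=e^{-A(p+3)\phi}$, and $\sup_{M}\phi=-1<0$, exactly as in Lemma~\ref{lem2.3}) shows that the coefficient of $(n-\tilde{\triangle}\phi)$ is still bounded below by a positive constant, while now a positive multiple $\gamma(p)$ of $e^{f_{p}(\phi)}(n+\triangle\phi)^{p-2}|\tilde{\nabla}(\triangle\phi)|^{2}$ survives. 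This yields
\begin{equation*}
\tilde{\triangle}v_{p}\geq\gamma(p)\,e^{f_{p}(\phi)}(n+\triangle\phi)^{p-2}|\tilde{\nabla}(\triangle\phi)|^{2}+C_{1}(p)(n+\triangle\phi)^{p+\frac{1}{n-1}}-C_{2}(p)(n+\triangle\phi)^{p}+p\,e^{f_{p}(\phi)}(n+\triangle\phi)^{p-1}\triangle F,
\end{equation*}
with $\gamma(p)>0$, that is, Lemma~\ref{lem2.3} with an additional good gradient term.

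Next I would integrate this against $\tilde{\omega}^{n}=e^{F}\omega^{n}$. Using $\tilde{\triangle}v_{p}\,\tilde{\omega}^{n}=n\sqrt{-1}\,\partial\bar{\partial}v_{p}\wedge\tilde{\omega}^{n-1}$ and integrating by parts twice (no boundary term, $M$ compact) one gets $\int_{M}\tilde{\triangle}v_{p}\,\tilde{\omega}^{n}=n\int_{M}\sqrt{-1}\,v_{p}\,\partial\bar{\partial}\tilde{\omega}^{n-1}$, and since $\phi$ solves (\ref{CMAE}) the manifold satisfies condition $(*)$ (automatic when $n=2$), so $-C\omega^{n}\leq\sqrt{-1}\partial\bar{\partial}\tilde{\omega}^{n-1}\leq C\omega^{n}$ and hence, as $v_{p}\geq0$, $\int_{M}\tilde{\triangle}v_{p}\,\tilde{\omega}^{n}\leq C(p)\int_{M}u^{p}\omega^{n}$. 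Rearranging the integrated inequality, the two nonnegative quantities $\int_{M}e^{f_{p}(\phi)}(n+\triangle\phi)^{p-2}|\tilde{\nabla}(\triangle\phi)|^{2}e^{F}\omega^{n}$ and $\int_{M}(n+\triangle\phi)^{p+\frac{1}{n-1}}e^{F}\omega^{n}$ sit on the left; the right-hand side is $C(p)\int_{M}u^{p}\omega^{n}$, which by Young's inequality $u^{p}\leq\eta\,u^{p+\frac{1}{n-1}}+C_{\eta}$ can be absorbed (with $\eta$ small), plus $p\big|\int_{M}e^{f_{p}(\phi)}(n+\triangle\phi)^{p-1}e^{F}\triangle F\,\omega^{n}\big|$, which is the only genuinely new thing to estimate.

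To estimate that term I would write $\triangle F\,\omega^{n}=n\sqrt{-1}\partial\bar{\partial}F\wedge\omega^{n-1}$ and integrate by parts once, moving $\partial$ onto $P:=e^{f_{p}(\phi)}(n+\triangle\phi)^{p-1}e^{F}$. Since $\partial P=e^{f_{p}(\phi)}e^{F}\big(f_{p}'(\phi)(n+\triangle\phi)^{p-1}\partial\phi+(p-1)(n+\triangle\phi)^{p-2}\partial(\triangle\phi)+(n+\triangle\phi)^{p-1}\partial F\big)$, this produces (up to constants depending on $p$, using that the exponential factors are pinched and $|\sqrt{-1}\alpha\wedge\bar{\partial}F\wedge\omega^{n-1}|\leq C|\alpha|\,|\nabla F|\,\omega^{n}$): a term $\leq C(p)\int_{M}u^{p-1}|\nabla\phi|\,|\nabla F|\omega^{n}$, a term $\leq C(p)\int_{M}u^{p-1}|\nabla F|^{2}\omega^{n}$, the torsion term $\int_{M}P\,\sqrt{-1}\bar{\partial}F\wedge\partial\omega^{n-1}$, which is $\leq C(p)\int_{M}u^{p-1}|\nabla F|\omega^{n}\leq C(p)\int_{M}u^{p-1}|\nabla F|^{2}\omega^{n}+\eta\int_{M}u^{p+\frac{1}{n-1}}\omega^{n}+C(p)$ by Young, and, only when $p>1$, the third-order term $(p-1)\int_{M}e^{f_{p}(\phi)}e^{F}(n+\triangle\phi)^{p-2}\sqrt{-1}\partial(\triangle\phi)\wedge\bar{\partial}F\wedge\omega^{n-1}$, bounded by $C(p)\int_{M}(n+\triangle\phi)^{p-2}|\nabla(\triangle\phi)|\,|\nabla F|\omega^{n}$.

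The main difficulty is this last, third-order, term, and it is exactly why one cannot merely quote Lemma~\ref{lem2.3} but must strengthen it. I would handle it by Lemma~\ref{lem4.1} applied to $f=\triangle\phi$, namely $|\nabla(\triangle\phi)|^{2}\leq C\,u\,|\tilde{\nabla}(\triangle\phi)|^{2}$, so that the integrand is $\leq C(n+\triangle\phi)^{p-2}u^{1/2}|\tilde{\nabla}(\triangle\phi)|\,|\nabla F|$, and by the Cauchy--Schwarz inequality its integral is $\leq\delta\int_{M}(n+\triangle\phi)^{p-2}|\tilde{\nabla}(\triangle\phi)|^{2}\omega^{n}+C_{\delta}(p)\int_{M}(n+\triangle\phi)^{p-2}u\,|\nabla F|^{2}\omega^{n}$, where $(n+\triangle\phi)^{p-2}u$ is comparable to $u^{p-1}$. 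Choosing $\delta$ small enough (depending on $p$) the first piece is absorbed into the good gradient term $\gamma(p)\int_{M}e^{f_{p}(\phi)}(n+\triangle\phi)^{p-2}|\tilde{\nabla}(\triangle\phi)|^{2}e^{F}\omega^{n}$ on the left, and with $\eta$ small all the $\int_{M}u^{p+\frac{1}{n-1}}\omega^{n}$ contributions are absorbed as well. Dropping the remaining nonnegative gradient term on the left, what survives is
\begin{equation*}
\int_{M}u^{p+\frac{1}{n-1}}\omega^{n}\leq C(p)\int_{M}u^{p-1}|\nabla\phi|\,|\nabla F|\omega^{n}+C(p)\int_{M}u^{p-1}|\nabla F|^{2}\omega^{n}+C(p),
\end{equation*}
which is the assertion; note $p-1\geq0$ is used throughout, and for $p=1$ the third-order term is simply absent.
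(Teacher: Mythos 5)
Your proof is correct, and its skeleton coincides with the paper's: integrate the pointwise inequality for $\tilde{\triangle}\bigl(e^{f_{p}(\phi)}(n+\triangle\phi)^{p}\bigr)$ against $\tilde{\omega}^{n}$, use condition $(*)$ to bound $\int_{M}\tilde{\triangle}\bigl(e^{f_{p}(\phi)}(n+\triangle\phi)^{p}\bigr)\tilde{\omega}^{n}$ by $C(p)\int_{M}u^{p}\omega^{n}$, integrate the $\triangle F$ term by parts (your $n\sqrt{-1}\,\partial\bar{\partial}F\wedge\omega^{n-1}$ formulation is, up to the chain rule, the paper's splitting $\triangle F\,e^{F}=\triangle(e^{F})-e^{F}|\nabla F|^{2}$), and absorb $\int_{M}u^{p}\omega^{n}$ and $\int_{M}u^{p-1}|\nabla F|\omega^{n}$ by Young's inequality. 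Where you genuinely diverge is the treatment of the third-order term containing $\nabla(\triangle\phi)$: the paper bounds $\int_{M}|\nabla(u^{p-1})||\nabla F|\omega^{n}$ by Cauchy--Schwarz against $\int_{M}|\nabla(u^{\frac{p-1}{2}})|^{2}\omega^{n}$ and then quotes Lemma \ref{lem4.2} (with exponent $p-1$), whose right-hand side consists precisely of the admissible quantities; you instead rerun the proof of Lemma \ref{lem2.3} with $\epsilon=\tfrac{3}{4}\cdot\frac{pf_{p}''}{(f_{p}')^{2}+f_{p}''}$ so that a coercive term $\gamma(p)\,e^{f_{p}(\phi)}(n+\triangle\phi)^{p-2}|\tilde{\nabla}(\triangle\phi)|^{2}$ survives (and indeed the coefficient of $(n-\tilde{\triangle}\phi)$ stays positive since $e^{-A(p+3)\phi}\geq e^{A(p+3)}$, so the $(n+\triangle\phi)^{p+\frac{1}{n-1}}$ term is produced exactly as before), and you then absorb the third-order term into this coercive term via Lemma \ref{lem4.1} and Cauchy--Schwarz with a small $\delta$, using that the weights $e^{f_{p}(\phi)}$, $e^{f_{1}(\phi)}$, $e^{F}$ are pinched by Theorem \ref{thm2.1}, so that $(n+\triangle\phi)^{p-2}u$ is comparable to $u^{p-1}$; your remark that the term is absent for $p=1$ is also correct. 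Both routes yield constants with the stated dependence; yours buys independence from Lemma \ref{lem4.2} at this point (at the cost of strengthening Lemma \ref{lem2.3}), whereas the paper keeps Lemma \ref{lem2.3} as stated and lets Lemma \ref{lem4.2}, which is needed anyway for the Moser iteration in Theorem \ref{thm4.3}, handle the critical term.
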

\begin{proof}
By Lemma \ref{lem2.3}, we integrate on $(M,\tilde{\omega})$, then
for any $p\geq1$, we get
\begin{align*}
\int_{M}\tilde{\triangle}(e^{f_{p}(\phi)}(n+\triangle\phi)^{p})\tilde{\omega}^{n}&\geq
C_{1}(p)\int_{M}u^{p+\frac{1}{n-1}}\tilde{\omega}^{n}-C_{2}(p)\int_{M}
u^{p}\tilde{\omega}^{n}\\
&~~~~+p\int_{M} e^{f_{p}(\phi)}(n+\triangle\phi)^{p-1}\triangle
Fe^{F}\omega^{n},
\end{align*}
where $C_{1}(p)=C_{1}(p,\|F\|_{L^{\infty}(M,\omega)},M,\omega)$ and
$C_{2}(p)=C_{2}(p,\|F\|_{L^{\infty}(M,\omega)},M,\omega)$. Here we
have used (\ref{u1}) and Theorem \ref{thm2.1}. Since $M$ satisfies
the condition $(*)$ (when $n=2$, all Hermitian manifolds satisfy the
condition $(*)$), we have
\begin{align*}
\int_{M}\tilde{\triangle}(e^{f_{p}(\phi)}(n+\triangle\phi)^{p})\tilde{\omega}^{n}&=
n\int_{M} e^{f_{p}(\phi)}(n+\triangle\phi)^{p}\sqrt{-1}\partial\bar{\partial}\tilde{\omega}^{n-1}\\
&\leq C_{3}(p)\int_{M} u^{p}\omega^{n},
\end{align*}
where $C_{3}(p)=C_{3}(p,\|F\|_{L^{\infty}(M,\omega)},M,\omega)$.
Combining the above inequalities, we compute

\begin{align*}
\int_{M} u^{p+\frac{1}{n-1}}\omega^{n}&\leq C_{4}(p)\int_{M}
e^{f_{p}(\phi)}(n+\triangle\phi)^{p-1}\left(|\nabla
F|^{2}e^{F}-\triangle(e^{F})\right)\omega^{n}+C_{5}(p)\int_{M} u^{p}\omega^{n}\\
&\leq C_{5}(p)\int_{M} u^{p-1}|\nabla
F|^{2}\omega^{n}+C_{4}(p)\int_{M}\nabla\left(e^{f_{p}(\phi)}(n+\triangle\phi)^{p-1}\right)\nabla
Fe^{F}\omega^{n}\\
&~~~~-C_{4}(p)\sqrt{-1}\int_{M} e^{f_{p}}(n+\triangle\phi)^{p-1}\bar{\partial}e^{F}\wedge\partial\omega^{n-1}+C_{5}(p)\int_{M} u^{p}\omega^{n}\\
&\leq C_{5}(p)\int_{M} u^{p}\omega^{n}+C_{5}(p)\int_{M}
u^{p-1}|\nabla
F|^{2}\omega^{n}+C_{5}(p)\int_{M} u^{p-1}|\nabla F|\omega^{n}\\
&~~~~+C_{5}(p)\int_{M} |\nabla(u^{p-1})||\nabla
F|\omega^{n}+C_{5}(p)\int_{M} u^{p-1}|\nabla\phi||\nabla
F|\omega^{n},
\end{align*}
where $C_{4}(p)=C_{4}(p,\|F\|_{L^{\infty}(M,\omega)},M,\omega)$ and
$C_{5}(p)=C_{5}(p,\|F\|_{L^{\infty}(M,\omega)},M,\omega)$. By the
Cauchy-Schwarz inequality, we have
\begin{align*}
C_{5}(p)\int_{M}|\nabla(u^{p-1})||\nabla
F|\omega^{n}&=C_{5}(p)\int_{M}|\nabla(u^{\frac{p-1}{2}})|u^{\frac{p-1}{2}}|\nabla
F|\omega^{n}\\
&\leq C_{5}(p)\int_{M}|\nabla
(u^{\frac{p-1}{2}})|^{2}\omega^{n}+C_{5}(p)\int_{M} u^{p-1}|\nabla
F|^{2}\omega^{n}.
\end{align*}
Combining with the above inequalities and Lemma \ref{lem4.2}, we get
$$\int_{M} u^{p+\frac{1}{n-1}}\omega^{n}\leq C_{6}(p)\int_{M} u^{p}\omega^{n}+C_{6}(p)\int_{M} u^{p-1}|\nabla\phi||\nabla F|\omega^{n}+C_{6}(p)\int_{M} u^{p-1}|\nabla F|^{2}\omega^{n},$$
where $C_{6}(p)=C_{6}(p,\|F\|_{L^{\infty}(M,\omega)},M,\omega)$. By
Young's inequality, we complete the proof.
\end{proof}

Now, we are in the position to prove Theorem \ref{mainthm2}.

\begin{proof}[Proof of Theorem \ref{mainthm2}]
Without loss of generality, we assume that $q_{0}<\infty$. By Lemma
\ref{lem4.4} and $F\in W^{1,q_{0}}$, for any $p\geq1$, we have
\begin{align*}
\int_{M} u^{p+\frac{1}{n-1}}\omega^{n}&\leq C_{1}(p)\int_{M}
u^{p-1}|\nabla\phi||\nabla F|\omega^{n}+C_{1}(p)\int_{M}
u^{p-1}|\nabla
F|^{2}\omega^{n}+C_{1}(p)\\
&\leq C_{1}(p)\int_{M}
u^{p-1}|\nabla\phi|^{2}\omega^{n}+C_{2}(p)\int_{M}
u^{(p-1)\frac{q_{0}}{q_{0}-2}}\omega^{n}+C_{2}(p),
\end{align*}
where $C_{1}(p)=C_{1}(p,\|F\|_{L^{\infty}(M,\omega)},M,\omega)$,
$C_{2}(p)=C_{2}(p,\|F\|_{W^{1,q_{0}}(M,\omega)},q_{0},M,\omega)$ and
we have used the H\"{o}lder inequality in the last line. When $p$
satisfies the following condition
\begin{equation*}
p+\frac{1}{n-1}>(p-1)\frac{q_{0}}{q_{0}-2}\Leftrightarrow
p<\frac{q_{0}-2}{2n-2}+\frac{q_{0}}{2}
\end{equation*}
and $p\geq1$, we can use Young's inequality to get the following
inequality
\begin{equation*}
\int_{M} u^{p+\frac{1}{n-1}}\omega^{n}\leq C_{3}(p)\int_{M}
u^{p-1}|\nabla\phi|^{2}\omega^{n}+C_{3}(p),
\end{equation*}
where
$C_{3}(p)=C_{3}(p,\|F\|_{W^{1,q_{0}}(M,\omega)},q_{0},M,\omega)$.
Now, we take $p=\frac{q_{0}}{2}-\frac{1}{n-1}$, we obtain
\begin{equation*}
\begin{split}
\int_{M} u^{\frac{q_{0}}{2}}\omega^{n}&\leq C_{4}\int_{M}
u^{\frac{q_{0}}{2}-\beta}|\nabla\phi|^{2}\omega^{n}+C_{4}\\
&\leq\frac{1}{2}\int_{M}
u^{(\frac{q_{0}}{2}-\beta)\frac{q_{0}}{q_{0}-2\beta}}\omega^{n}+C_{4}\int_{M}|\nabla\phi|^{\frac{q_{0}}{\beta}}\omega^{n}+C_{4},
\end{split}
\end{equation*}
where $C_{4}=C_{4}(\|F\|_{W^{1,q_{0}}(M,\omega)},q_{0},M,\omega)$
and $\beta=\frac{n}{n-1}$. It then follows that
\begin{equation}\label{4.3}
\|u\|_{L^{\frac{q_{0}}{2}}(M,\omega)}\leq
C_{4}\|\nabla\phi\|_{L^{\frac{q_{0}}{\beta}}(M,\omega)}^{\frac{2}{\beta}}+C_{4}.
\end{equation}
By Lemma \ref{lem3.7}, we have
\begin{equation}\label{4.4}
\|\nabla\phi\|_{L^{\frac{q_{0}}{\beta}}(M,\omega)}\leq
C_{5}\|u\|_{L^{\frac{q_{0}}{2\beta}}(M,\omega)}^{\frac{1}{2}}+C_{5},
\end{equation}
where $C_{5}=C_{5}(q_{0},\|F\|_{L^{\infty}(M,\omega)},M,\omega)$.
Combining (\ref{4.3}), (\ref{4.4}) and $\beta>1$, we get
$$\|u\|_{L^{\frac{q_{0}}{2}}(M,\omega)}\leq C_{6}(\|F\|_{W^{1,q_{0}}(M,\omega)},q_{0},M,\omega).$$
By Theorem \ref{thm4.3}, we complete the proof.
\end{proof}

\section{The H\"{o}lder estimate of second order and solve the equation}
We note that, when $F$ is in $W^{1,q_{0}}$ for any $q_{0}>2n$,
Sobolev embedding implies that $F\in C^{\alpha_{0}}$, where
$\alpha_{0}=1-\frac{2n}{q_{0}}$. By Theorem 1.1 in \cite{TWWY}, we
have the following theorem.
\begin{theorem}\label{thm5.1}
Let $(M,\omega)$ be a compact Hermitian manifold. If $\phi$ is a
smooth solution of (\ref{CMAE}) and $F\in C^{\alpha_{0}}$, then
there exists a constant $\alpha\in(0,1)$ such that
\begin{equation*}
    \|\phi\|_{C^{2,\alpha}(M,\omega)}\leq C,
\end{equation*}
where $\alpha$ and $C$ depend only on
$\|\phi\|_{L^{\infty}(M,\omega)},\|\triangle\phi\|_{L^{\infty}(M,\omega)},\alpha_{0},\|F\|_{C^{\alpha_{0}}(M,\omega)},q_{0},M$
and $\omega$.
\end{theorem}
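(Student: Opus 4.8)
The plan is to reduce Theorem~\ref{thm5.1} to a local $C^{2,\alpha}$ estimate for concave, uniformly elliptic fully nonlinear equations with a H\"{o}lder-continuous right-hand side, in the form proved by Tosatti-Wang-Weinkove-Yang.

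First I would convert the Laplacian bound into uniform ellipticity. The hypothesis $\|n+\triangle\phi\|_{L^{\infty}(M,\omega)}\le C$ together with the equation $\tilde{\omega}^{n}=e^{F}\omega^{n}$ gives two-sided bounds $c\,\omega\le\tilde{\omega}\le C\,\omega$: if $\lambda_{1},\dots,\lambda_{n}>0$ are the eigenvalues of $\tilde{g}$ relative to $g$, then $\sum_{i}\lambda_{i}=n+\triangle\phi\le C$ forces $\lambda_{i}\le C$, while $\prod_{i}\lambda_{i}=e^{F}\ge e^{-\|F\|_{L^{\infty}}}$ then forces $\lambda_{i}\ge e^{-\|F\|_{L^{\infty}}}C^{-(n-1)}$. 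Hence, in each chart of a fixed finite holomorphic atlas of $M$, the equation
\[
\log\det\bigl(g_{i\bar{j}}+\phi_{i\bar{j}}\bigr)=F+\log\det(g_{i\bar{j}})
\]
is a uniformly elliptic equation for the complex Hessian $(\phi_{i\bar{j}})$, and $A\mapsto\log\det A$ is concave on the cone of positive Hermitian matrices. Viewing $\phi_{i\bar{j}}$ as a fixed linear combination of entries of the real Hessian $D^{2}\phi$, this reads $G(D^{2}\phi,z)=F(z)$, a uniformly elliptic concave equation whose dependence on $z$ (through $g_{i\bar{j}}(z)$) is as regular as the metric, with $F\in C^{\alpha_{0}}$ since $W^{1,q_{0}}\hookrightarrow C^{\alpha_{0}}$ for $\alpha_{0}=1-\tfrac{2n}{q_{0}}$.

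Next I would apply the interior $C^{2,\alpha}$ estimate for such equations, which is Theorem~1.1 of \cite{TWWY}; it is an adaptation to the complex (and Hermitian) setting of the Evans-Krylov theorem combined with Caffarelli's perturbation argument, designed precisely to handle a merely H\"{o}lder right-hand side. On concentric balls $B_{r/2}\subset B_{r}$ inside a chart it yields $\|\phi\|_{C^{2,\alpha}(B_{r/2})}\le C$ for some $\alpha\in(0,1)$, with $\alpha$ and $C$ depending only on the ellipticity constants (hence on $\|\triangle\phi\|_{L^{\infty}(M,\omega)}$ and $\sup_{M}F$), on $\|\phi\|_{L^{\infty}(M,\omega)}$, $\|F\|_{C^{\alpha_{0}}(M,\omega)}$, $\alpha_{0}$, $q_{0}$, and on $(M,\omega)$. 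Covering $M$ by finitely many such balls gives the claimed global bound.

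The main obstacle, and the reason one must invoke \cite{TWWY} rather than the classical Evans-Krylov theorem directly, is twofold. First, classical Evans-Krylov requires the inhomogeneous term to be at least $C^{1,1}$ (or Lipschitz), whereas here $F$ is only $C^{\alpha_{0}}$, so one needs Caffarelli-type perturbation and compactness machinery to produce $C^{2,\alpha}$ regularity from a H\"{o}lder right-hand side. Second, on a non-K\"{a}hler Hermitian manifold the equation, when differentiated or when the reference metric varies from point to point, produces extra first-order torsion terms; these are genuinely lower order but must be absorbed carefully in the iteration rather than ignored. Both issues are exactly what \cite{TWWY} addresses, so Theorem~\ref{thm5.1} follows once one checks that its hypotheses — uniform ellipticity (established above) and $F\in C^{\alpha_{0}}$ — are met.
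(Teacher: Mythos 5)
Your proposal is correct and follows essentially the same route as the paper, which obtains Theorem \ref{thm5.1} directly as a citation of Theorem 1.1 in \cite{TWWY} (the $C^{2,\alpha}$ estimate there is stated precisely in terms of $\|\phi\|_{L^{\infty}}$, $\|\triangle\phi\|_{L^{\infty}}$ and the $C^{\alpha_{0}}$ norm of the right-hand side on a compact Hermitian manifold). Your additional verification of uniform ellipticity from the trace and determinant bounds, and the covering argument, are just the details implicit in that citation.
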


Now we are in the position to prove Theorem \ref{mainthm1}.
\begin{proof}[Proof of Theorem \ref{mainthm1}]
Our argument here is similar to the argument in \cite{CHH}. If $F\in
W^{1,q_{0}}$ on $M$ such that $\|F\|_{W^{1,q_{0}}(M,\omega)}\leq
\Lambda$ for some positive constant $\Lambda$. Let $\{F_{k}\}$ be a
sequence of smooth functions such that $F_{k}\rightarrow F$ in
$W^{1,q_{0}}$. In particular, we can assume
$\|F_{k}\|_{W^{1,q_{0}}(M,\omega)}\leq \Lambda+1$ for any $k$. By
\cite{TW2}, there is a unique smooth solution $\phi_{k}$ and
constant $b_{k}$ such that
\begin{equation*}
\det(g_{i\bar{j}}+(\phi_{k})_{i\bar{j}})=e^{F+b_{k}}\det(g_{i\bar{j}}),
\end{equation*}
such that $(g_{i\bar{j}}+(\phi_{k})_{i\bar{j}})>0$ with normalized
condition $\sup_{M}\phi_{k}=-1$. By Maximum Principle, we have
\begin{equation}\label{5.1}
|b_{k}|\leq C_{1}(\|F_{k}\|_{L^{\infty}(M,\omega)},M,\omega).
\end{equation}
By Theorem \ref{mainthm2}, Theorem \ref{thm2.1} and Theorem
\ref{thm5.1}, there exists a constant $\alpha\in(0,1)$ such that
\begin{equation*}
\|\phi_{k}\|_{C^{2,\alpha}(M,\omega)}\leq
C_{2}(\|F_{k}\|_{W^{1,q_{0}}(M,\omega)},q_{0},M,\omega).
\end{equation*}
To get $W^{3,q_{0}}$-estimate, we can localize the estimate as
follows. Let $\partial$ denote an arbitrary first order differential
operator in a domain $\Omega\subset M$. Since we have
$C^{2,\alpha}$-estimate, we compute in $\Omega$
\begin{equation*}
\tilde{\triangle}_{g_{k}}(\partial\phi_{k})=\partial(F_{k}+\log(\det(g_{i\bar{j}})))-(g_{k})^{i\bar{j}}\partial
g_{i\bar{j}},
\end{equation*}
where $(g_{k})_{i\bar{j}}=g_{i\bar{j}}+(\phi_{k})_{i\bar{j}}$. Since
$\tilde{\triangle}_{g_{k}}$ is a uniform elliptic operator, by
$L^{p}$ estimates (for example, see \cite{GT}), for any
$\Omega'\subset\Omega$, we have
\begin{equation*}
\|\partial\phi_{k}\|_{W^{2,q_{0}}(\Omega',\omega)}\leq
C_{3}(\Omega,\Omega',q_{0},\Lambda,\omega),
\end{equation*}
which implies
\begin{equation}\label{5.2}
\|\phi_{k}\|_{W^{3,q_{0}}(M,\omega)}\leq
C_{4}(\|F\|_{W^{1,q_{0}}(M,\omega)},q_{0},\Lambda,M,\omega).
\end{equation}
By (\ref{5.1}) and (\ref{5.2}), we know that there is a subsequence
$\{(\phi_{k_{l}},b_{k_{l}})\}$ of $\{(\phi_{k},b_{k})\}$ such that
$\{b_{k_{l}}\}$ converges to $b$ and $\{\phi_{k_{l}}\}$ converges to
$\phi\in W^{3,q_{0}}$ such that $(g_{i\bar{j}}+\phi_{i\bar{j}})>0$,
which defines a $W^{1,q_{0}}$ Hermitian metric. Hence $\phi$ with
constant $b$ is a classical solution of the complex Monge-Amp\`{e}re
equation. The uniqueness follows from Remark 5.1 in \cite{TW1}.
\end{proof}

\section{Appendix}
Let $g_{\mathbb{R}}$ denote the Riemannian metric induced by $g$,
thus $(M,g_{\mathbb{R}})$ is a Riemannian manifold of real dimension
$2n$. In Appendix, we deduce some interpolation inequalities on
Hermitian manifold $(M,\omega)$ by using some fundamental
inequalities on Riemannain manifold $(M,g_{\mathbb{R}})$.

Let us recall the definition of $g_{\mathbb{R}}$ first. For any
local holomorphic coordinates $(z^{1},\cdots,z^{n})$ with
$z^{i}=x^{i}+\sqrt{-1}y^{i}$,
$(x^{1},\cdots,x^{n},y^{1},\cdots,y^{n})$ form a smooth local
coordinates. We define,
\begin{equation*}
g_{\mathbb{R}}(\frac{\partial}{\partial
x^{i}},\frac{\partial}{\partial x^{j}})
=g_{\mathbb{R}}(\frac{\partial}{\partial
y^{i}},\frac{\partial}{\partial y^{j}}) =2\Re(g_{i\bar{j}})
\end{equation*}
while
\begin{equation*}
g_{\mathbb{R}}(\frac{\partial}{\partial
x^{i}},\frac{\partial}{\partial y^{j}}) =2\Im(g_{i\bar{j}}).
\end{equation*}
For Riemannian metric $g_{\mathbb{R}}$, let $\nabla_{\mathbb{R}}$
and $dV_{\mathbb{R}}$ denote the Levi-Civita connection and the
volume form, respectively. By direct calculation, we have
\begin{equation}\label{3.1}
dV_{\mathbb{R}}=\frac{1}{n!}\omega^{n}.
\end{equation}
For convenience, we introduce some notations. For any function $f\in
C^{\infty}(M)$, let $\nabla^{m}_{\mathbb{R}}f$ and
$\triangle_{\mathbb{R}}f$ denote the $m^{th}$ covariant derivative
and the Laplacian of $f$ with respect to $g_{\mathbb{R}}$. Let
$\|f\|_{L^{p}(M,g_{\mathbb{R}})}$ and
$\|\nabla^{m}_{\mathbb{R}}f\|_{L^{p}(M,g_{\mathbb{R}})}$ denote the
corresponding norms with respect to $(M,g_{\mathbb{R}})$.

Thus, by (\ref{3.1}) and some calculations, we have the following
lemma.
\begin{lemma}\label{lem3.1}
For any $f\in C^{\infty}(M)$, we have
\begin{equation*}
\|f\|_{L^{p}(M,g_{\mathbb{R}})}=C_{1}(p)\|f\|_{L^{p}(M,\omega)},~~~\|\nabla_{\mathbb{R}}f\|_{L^{p}(M,g_{\mathbb{R}})}=C_{2}(p)\|\nabla
f\|_{L^{p}(M,\omega)},
\end{equation*}
where $C_{1}(p)=C_{1}(p,n)$ and $C_{2}(p)=C_{2}(p,n)$.
\end{lemma}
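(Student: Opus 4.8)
The plan is to reduce both identities to pointwise facts and then integrate. The first identity is immediate: by (\ref{3.1}) we have $dV_{\mathbb{R}}=\frac{1}{n!}\omega^{n}$, so
$$\|f\|_{L^{p}(M,g_{\mathbb{R}})}^{p}=\int_{M}|f|^{p}\,dV_{\mathbb{R}}=\frac{1}{n!}\int_{M}|f|^{p}\,\omega^{n}=\frac{1}{n!}\|f\|_{L^{p}(M,\omega)}^{p},$$
which gives $C_{1}(p)=(n!)^{-1/p}$, depending only on $p$ and $n$.

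For the second identity, the key is the pointwise claim that $|\nabla_{\mathbb{R}}f|_{g_{\mathbb{R}}}^{2}=2|\nabla f|^{2}$ everywhere on $M$. To verify this at an arbitrary point $x_{0}$, I would choose local holomorphic coordinates $(z^{1},\dots,z^{n})$ centered at $x_{0}$ with $g_{i\bar{j}}(x_{0})=\delta_{ij}$. Writing $z^{i}=x^{i}+\sqrt{-1}y^{i}$, the defining relations of $g_{\mathbb{R}}$ give, at $x_{0}$, $g_{\mathbb{R}}(\partial_{x^{i}},\partial_{x^{j}})=g_{\mathbb{R}}(\partial_{y^{i}},\partial_{y^{j}})=2\delta_{ij}$ and $g_{\mathbb{R}}(\partial_{x^{i}},\partial_{y^{j}})=0$, hence $g_{\mathbb{R}}^{ab}(x_{0})=\tfrac12\delta^{ab}$ in the real coordinates. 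From $\partial_{x^{i}}=\partial_{z^{i}}+\partial_{\bar{z}^{i}}$ and $\partial_{y^{i}}=\sqrt{-1}(\partial_{z^{i}}-\partial_{\bar{z}^{i}})$ one computes $\partial_{x^{i}}f=f_{i}+f_{\bar{i}}$ and $\partial_{y^{i}}f=\sqrt{-1}(f_{i}-f_{\bar{i}})$, so that $(\partial_{x^{i}}f)^{2}+(\partial_{y^{i}}f)^{2}=4f_{i}f_{\bar{i}}$. Therefore
$$|\nabla_{\mathbb{R}}f|_{g_{\mathbb{R}}}^{2}(x_{0})=\tfrac12\sum_{i}\big((\partial_{x^{i}}f)^{2}+(\partial_{y^{i}}f)^{2}\big)=2\sum_{i}f_{i}f_{\bar{i}}=2\,g^{i\bar{j}}f_{i}f_{\bar{j}}(x_{0})=2|\nabla f|^{2}(x_{0}),$$
and since $x_{0}$ was arbitrary, the pointwise identity holds on all of $M$.

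Finally, combining the pointwise identity with $dV_{\mathbb{R}}=\frac{1}{n!}\omega^{n}$ yields
$$\|\nabla_{\mathbb{R}}f\|_{L^{p}(M,g_{\mathbb{R}})}^{p}=\int_{M}\big(2|\nabla f|^{2}\big)^{p/2}\,dV_{\mathbb{R}}=\frac{2^{p/2}}{n!}\int_{M}|\nabla f|^{p}\,\omega^{n}=\frac{2^{p/2}}{n!}\|\nabla f\|_{L^{p}(M,\omega)}^{p},$$
so $C_{2}(p)=2^{1/2}(n!)^{-1/p}$, again depending only on $p$ and $n$. I do not anticipate a genuine obstacle here; the only point requiring care is bookkeeping the factor $2$ coming from the normalization $g_{\mathbb{R}}=2\Re(g)$ in the definition, and, if one wishes to allow complex-valued $f$, noting that $|\nabla f|^{2}=g^{i\bar{j}}f_{i}f_{\bar{j}}$ and the elementary identity $(\partial_{x^{i}}f)^{2}+(\partial_{y^{i}}f)^{2}=4f_{i}f_{\bar{i}}$ both hold formally without any reality assumption.
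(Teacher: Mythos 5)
Your proof is correct and follows the same route the paper intends: the paper disposes of this lemma with ``by (\ref{3.1}) and some calculations,'' and your argument simply supplies those calculations, namely the volume-form identity $dV_{\mathbb{R}}=\frac{1}{n!}\omega^{n}$ for the $L^{p}$ norms and the pointwise identity $|\nabla_{\mathbb{R}}f|_{g_{\mathbb{R}}}^{2}=2|\nabla f|^{2}$ (correctly tracking the factor $2$ in the definition of $g_{\mathbb{R}}$) for the gradient norms. The constants $C_{1}(p)=(n!)^{-1/p}$ and $C_{2}(p)=2^{1/2}(n!)^{-1/p}$ indeed depend only on $p$ and $n$, as claimed.
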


\begin{corollary}\label{coro3.2}
For any $f\in C^{\infty}(M)$, we have Sobolev inequality
\begin{equation*}
\left(\int_{M} f^{2\beta}\omega^{n}\right)^{\frac{1}{\beta}}\leq
C\int_{M} f^{2}\omega^{n}+C\int_{M}|\nabla f|^{2}\omega^{n},
\end{equation*}
where $\beta=\frac{n}{n-1}$ and $C=C(M,\omega)$.
\end{corollary}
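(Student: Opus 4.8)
The plan is to reduce the inequality to the classical Sobolev inequality on the compact Riemannian manifold $(M,g_{\mathbb{R}})$ of real dimension $2n$, and then translate the result back to $(M,\omega)$ using Lemma \ref{lem3.1}.

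First I would recall that on any compact Riemannian manifold $(N,h)$ of real dimension $m\geq 3$ one has the Sobolev embedding $W^{1,2}\hookrightarrow L^{2^{*}}$ with critical exponent $2^{*}=\frac{2m}{m-2}$; that is, there is a constant $C=C(N,h)$ with
\[
\|f\|_{L^{2^{*}}(N,h)}\leq C\|f\|_{L^{2}(N,h)}+C\|\nabla_{h}f\|_{L^{2}(N,h)}
\]
for all $f\in C^{\infty}(N)$. I would apply this to $(N,h)=(M,g_{\mathbb{R}})$ with $m=2n$. Since $n\geq 2$ we have $2n\geq 4>2$, so the embedding is available, and moreover
\[
2^{*}=\frac{2\cdot 2n}{2n-2}=\frac{2n}{n-1}=2\beta,
\]
so the target exponent is exactly $2\beta$.

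Squaring the Riemannian Sobolev inequality and using $(a+b)^{2}\leq 2a^{2}+2b^{2}$ gives
\[
\|f\|_{L^{2\beta}(M,g_{\mathbb{R}})}^{2}\leq 2C^{2}\|f\|_{L^{2}(M,g_{\mathbb{R}})}^{2}+2C^{2}\|\nabla_{\mathbb{R}}f\|_{L^{2}(M,g_{\mathbb{R}})}^{2}.
\]
Then I would invoke Lemma \ref{lem3.1} to replace each of the three norms by its $(M,\omega)$-counterpart, at the cost of the dimensional constants $C_{1}(2\beta)$, $C_{1}(2)$, $C_{2}(2)$, which depend only on $n$. Recalling $dV_{\mathbb{R}}=\frac{1}{n!}\omega^{n}$ from (\ref{3.1}), this produces
\[
\left(\int_{M}f^{2\beta}\omega^{n}\right)^{\frac{1}{\beta}}=\|f\|_{L^{2\beta}(M,\omega)}^{2}\leq C\int_{M}f^{2}\omega^{n}+C\int_{M}|\nabla f|^{2}\omega^{n}
\]
with $C=C(M,\omega)$, as desired.

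This argument is essentially bookkeeping and presents no genuine difficulty. The only points requiring a little care will be (i) checking that the real dimension $2n$ is at least $3$ so the Riemannian Sobolev embedding applies (true since $n\geq 2$, indeed $2n\geq 4$), and (ii) verifying that the critical Sobolev exponent $\frac{2\cdot 2n}{2n-2}$ coincides with $2\beta=\frac{2n}{n-1}$, so that no further interpolation or H\"older step is needed to reach the stated exponent.
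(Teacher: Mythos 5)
Your proof is correct and follows essentially the same route as the paper: both apply the Riemannian Sobolev embedding $W^{1,2}(M,g_{\mathbb{R}})\hookrightarrow L^{2\beta}(M,g_{\mathbb{R}})$ on the real $2n$-dimensional manifold (noting $2^{*}=\frac{2\cdot 2n}{2n-2}=2\beta$) and then transfer all norms to $(M,\omega)$ via Lemma \ref{lem3.1} and (\ref{3.1}). The only cosmetic difference is that you state the embedding in norm form and square it, while the paper quotes it directly in the squared integral form.
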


\begin{proof}
By Sobolev embedding $W^{1,2}(M,g_{\mathbb{R}})\hookrightarrow
L^{2\beta}(M,g_{\mathbb{R}})$, we have
\begin{equation*}
\left(\int_{M}
f^{2\beta}dV_{\mathbb{R}}\right)^{\frac{1}{\beta}}\leq C_{s}\int_{M}
f^{2}dV_{\mathbb{R}}+C_{s}\int_{M}|\nabla_{\mathbb{R}}
f|^{2}dV_{\mathbb{R}},
\end{equation*}
where $C_{s}=C_{s}(M,g_{\mathbb{R}})$. Thus, combining with Lemma
\ref{lem3.1}, we complete the proof.
\end{proof}

Because $(M,g_{\mathbb{R}})$ is a Riemannian manifold of real
dimension $2n$. So we have the following interpolation inequality
(for example, see \cite{Au}).

\begin{theorem}\label{thm3.3}
Let $q$, $r$ be real numbers $1\leq q,r\leq+\infty$ and $j$, $m$
integers $0\leq j<m$. Then there exists a constant
$$C=C(M,g_{\mathbb{R}},m,j,q,r,\alpha)$$
such that for all $f\in C^{\infty}(M)$ with $\int_{M}
fdV_{\mathbb{R}}=0$, we have
\begin{equation}\label{3.2}
\|\nabla^{j}_{\mathbb{R}}f\|_{L^{p}(M,g_{\mathbb{R}})}\leq C
\|\nabla^{m}f\|_{L^{r}(M,g_{\mathbb{R}})}^{\alpha}
\|f\|_{L^{q}(M,g_{\mathbb{R}})}^{1-\alpha},
\end{equation}
where
$$\frac{1}{p}=\frac{j}{2n}+\alpha(\frac{1}{r}-\frac{m}{2n})+(1-\alpha)\frac{1}{q}$$
for all $\alpha$ in the interval $\frac{j}{m}\leq\alpha\leq 1$, for
which $p$ is non-negative. If $r=\frac{2n}{m-j}\neq 1$, then
(\ref{3.2}) is not valid for $\alpha=1$.
\end{theorem}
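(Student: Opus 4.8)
The statement is the classical Gagliardo--Nirenberg interpolation inequality on the compact Riemannian manifold $(M,g_{\mathbb{R}})$, and the reference to \cite{Au} signals that the intended proof is the standard reduction to the Euclidean inequality of Nirenberg. The plan is to localize, transfer to $\mathbb{R}^{2n}$, apply the Euclidean inequality, and then patch the pieces back together on $M$.

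First I would fix a finite atlas $\{(\Omega_\sigma,\psi_\sigma)\}_{\sigma=1}^{N_0}$ together with a subordinate smooth partition of unity $\{\rho_\sigma\}$. On each $\Omega_\sigma$ the metric $g_{\mathbb{R}}$ and its inverse are uniformly comparable to the Euclidean metric, the Christoffel symbols and all their derivatives are bounded, and $dV_{\mathbb{R}}$ is comparable to Lebesgue measure; consequently, in the coordinates $\psi_\sigma$ the covariant derivative $\nabla^k_{\mathbb{R}}h$ of any function differs from the array of partial derivatives $\partial^\gamma h$, $|\gamma|\le k$, only by terms involving at most $k-1$ derivatives of $h$ with bounded coefficients. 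Writing $f=\sum_\sigma\rho_\sigma f$ and $v_\sigma:=(\rho_\sigma f)\circ\psi_\sigma^{-1}\in C_c^\infty(\mathbb{R}^{2n})$, the triangle inequality gives $\|\nabla^j_{\mathbb{R}}f\|_{L^p(M)}\le C\sum_\sigma\bigl(\|D^jv_\sigma\|_{L^p(\mathbb{R}^{2n})}+\text{lower order}\bigr)$, and conversely the Leibniz rule controls $\|D^mv_\sigma\|_{L^r}$ by $\|\nabla^m_{\mathbb{R}}f\|_{L^r(M)}$ plus lower order terms, and $\|v_\sigma\|_{L^q}$ by $\|f\|_{L^q(M)}$.

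Next I would invoke Nirenberg's Euclidean interpolation inequality applied to each $v_\sigma$, which yields $\|D^jv_\sigma\|_{L^p}\le C\|D^mv_\sigma\|_{L^r}^\alpha\|v_\sigma\|_{L^q}^{1-\alpha}$ under precisely the stated dimensional identity $\tfrac1p=\tfrac{j}{2n}+\alpha(\tfrac1r-\tfrac{m}{2n})+(1-\alpha)\tfrac1q$ and range $\tfrac jm\le\alpha\le1$, with the same exclusion of $\alpha=1$ in the borderline case $r=\tfrac{2n}{m-j}$. Summing over the finitely many charts (a finite sum of products is bounded by a constant times the product of the corresponding sums) produces a ``polluted'' inequality in which the right-hand side also contains intermediate covariant derivative norms $\|\nabla^k_{\mathbb{R}}f\|$, $0<k<m$. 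To remove these I would argue by induction on $m$: each such intermediate norm is itself interpolated, by the already-established polluted inequality with the matching exponents, between $\|\nabla^m_{\mathbb{R}}f\|_{L^r}$ and an $L^{q'}$-norm of $f$, and plugging these back in and using Young's inequality to absorb the few reappearances of the top-order term with a small coefficient reduces everything to $\|\nabla^m_{\mathbb{R}}f\|_{L^r}$ and $\|f\|_{L^q}$ alone. The hypothesis $\int_Mf\,dV_{\mathbb{R}}=0$ enters at the base of the induction through the Poincar\'e-type inequality on the compact $M$, which disposes of the $j=0$ instances and of the residual lower-order $L^q$-norms of $f$.

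The main obstacle is the bookkeeping in this last step: the intermediate interpolations must be chosen so that the dimensional identity and the exponent $\alpha$ are preserved exactly, and so that Young's inequality applies with the right powers, which forces one to track which values of $\alpha$ (and which auxiliary Lebesgue exponents) are admissible at each stage. A second delicate point is the borderline case $r=\tfrac{2n}{m-j}$: there the putative endpoint $\alpha=1$ is a critical Sobolev embedding that genuinely fails (the model being $W^{1,2n}\not\hookrightarrow L^\infty$), so the exclusion in the statement is sharp and must be respected throughout the patching. The Euclidean inequality itself I would simply quote from Nirenberg rather than reprove.
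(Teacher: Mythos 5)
You should know at the outset that the paper does not prove Theorem \ref{thm3.3} at all: it is quoted from Aubin's book \cite{Au}, so there is no argument in the text to compare yours with, and your proposal must stand on its own. Its frame (partition of unity, Euclidean Gagliardo--Nirenberg in each chart, removal of lower-order pollution, zero mean entering via Poincar\'e) is the right family of ideas, but the step you dismiss as ``bookkeeping'' is exactly where the proof lives, and as described it does not go through. Three concrete problems. First, the intermediate norms $\|\nabla^k_{\mathbb{R}}f\|_{L^r}$, $0<k<m$, created by differentiating the cutoffs must be interpolated between $\|\nabla^m_{\mathbb{R}}f\|_{L^r}$ and a norm of $f$ \emph{with the same top order} $m$; these are instances of the very theorem being proved, so an induction on $m$ does not supply them without circularity (the $\varepsilon$-form as in Corollary \ref{coro3.4} only yields additive bounds). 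Second, Young's inequality converts products into sums, so your absorption scheme naturally lands on an \emph{additive} estimate of the type $\|\nabla^j_{\mathbb{R}}f\|_{L^p}\le C\|\nabla^m_{\mathbb{R}}f\|_{L^r}^{\alpha}\|f\|_{L^q}^{1-\alpha}+C\|f\|_{L^q}$; on $\mathbb{R}^{2n}$ one upgrades this to the multiplicative form by dilation, but a compact manifold admits no scaling, so this upgrade needs a genuine argument, not absorption. Third, your proposed disposal of the residual term by Poincar\'e amounts to $\|f\|_{L^q}\le C\|\nabla^m_{\mathbb{R}}f\|_{L^r}$ for zero-mean $f$, which is false whenever $\frac1q<\frac1r-\frac{m}{2n}$ (for instance $q=\infty$, $r=1$, $m=2$, $2n=4$, a regime permitted by the hypotheses), as a concentrating bump shows.

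The standard way to close the gap (Nirenberg's, and in substance Aubin's) is not absorption but reduction to two extreme cases: $\alpha=1$, which is Sobolev embedding combined with Poincar\'e for zero-mean $f$; and $\alpha=\frac{j}{m}$, which is proved by iterated integration by parts and works directly on the closed manifold --- in the simplest instance $\int_M|\nabla_{\mathbb{R}}f|^2\,dV_{\mathbb{R}}=-\int_Mf\,\triangle_{\mathbb{R}}f\,dV_{\mathbb{R}}\le\|f\|_{L^\infty(M,g_{\mathbb{R}})}\|\triangle_{\mathbb{R}}f\|_{L^1(M,g_{\mathbb{R}})}$, which is precisely the case $j=1$, $m=2$, $\alpha=\frac12$. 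The general $\alpha\in(\frac{j}{m},1)$ then follows by H\"older's inequality, interpolating $\|\nabla^j_{\mathbb{R}}f\|_{L^p}$ between the two exponents already handled; this also keeps the borderline exclusion $r=\frac{2n}{m-j}$, $\alpha=1$ confined to the Sobolev endpoint where it belongs. If you intend to give a complete proof rather than cite \cite{Au}, that is the route to take.
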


\begin{corollary}\label{coro3.4}
Let $f\in C^{\infty}(M)$, for any $\epsilon>0$ and $1\leq p<\infty$,
we have
$$\|\nabla_{\mathbb{R}} f\|_{L^{p}(M,g_{\mathbb{R}})}\leq \epsilon \|\nabla^{2}_{\mathbb{R}}f\|_{L^{p}(M,g_{\mathbb{R}})}+C(\epsilon,p)\|f\|_{L^{p}(M,g_{\mathbb{R}})},$$
where $C(\epsilon,p)=C(\epsilon,p,M,\omega)$.
\end{corollary}

\begin{proof}
Define $\tilde{f}=f-\frac{1}{Vol(M,g_{\mathbb{R}})}\int_{M}
fdV_{\mathbb{R}}$, then $\int_{M} \tilde{f}dV_{\mathbb{R}}=0$. By
Theorem \ref{thm3.3}, we have
$$\|\nabla_{\mathbb{R}}\tilde{f}\|_{L^{p}(M,g_{\mathbb{R}})}\leq C_{1}(p)\|\nabla^{2}_{\mathbb{R}}\tilde{f}\|_{L^{p}(M,g_{\mathbb{R}})}^{\frac{1}{2}}\|\tilde{f}\|_{L^{p}(M,g_{\mathbb{R}})}^{\frac{1}{2}},$$
where $C_{1}(p)=C_{1}(p,M,g_{\mathbb{R}})$. Thus, by the
Cauchy-Schwarz inequality, for any $\epsilon>0$, we obtain
$$\|\nabla_{\mathbb{R}}\tilde{f}\|_{L^{p}(M,g_{\mathbb{R}})}\leq \epsilon \|\nabla^{2}_{\mathbb{R}}\tilde{f}\|_{L^{p}(M,g_{\mathbb{R}})}+C_{2}(\epsilon,p)\|\tilde{f}\|_{L^{p}(M,g_{\mathbb{R}})},$$
where $C_{2}(\epsilon,p)=C_{2}(\epsilon,p,M,g_{\mathbb{R}})$. By the
definition of $\tilde{f}$, we complete the proof.
\end{proof}

\begin{lemma}\label{lem3.5}
Let $(M,\omega)$ be a compact Hermitian manifold of complex
dimension $n$. If $\phi$ is a smooth solution of (\ref{CMAE}), then
for any $1<p<\infty$, we have
\begin{equation*}
\|\triangle_{\mathbb{R}}\phi\|_{L^{p}(M,\omega)}\leq
C_{1}(p)\|\triangle\phi\|_{L^{p}(M,\omega)}+C_{2}(p),
\end{equation*}
where $C_{1}=C_{1}(p,n)$ and
$C_{2}(p)=C_{2}(p,\|F\|_{L^{\infty}(M,\omega)},M,\omega)$.
\end{lemma}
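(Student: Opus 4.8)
The plan is to compare the Laplace--Beltrami operator $\triangle_{\mathbb{R}}$ with twice the Chern Laplacian $\triangle$: I would show that $\triangle_{\mathbb{R}}-2\triangle$ is a fixed first order operator (a vector field), so that $\|\triangle_{\mathbb{R}}\phi\|_{L^{p}}$ is controlled by $\|\triangle\phi\|_{L^{p}}$ together with $\|\nabla_{\mathbb{R}}\phi\|_{L^{p}}$; then absorb the gradient term using the interpolation inequality of Corollary \ref{coro3.4} together with the standard Calder\'on--Zygmund $L^{p}$ estimate on the compact Riemannian manifold $(M,g_{\mathbb{R}})$; and finally bound the leftover $L^{p}$-norm of $\phi$ by the $C^{0}$-estimate of Theorem \ref{thm2.1}. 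Throughout I would pass freely between norms with respect to $(M,g_{\mathbb{R}})$ and $(M,\omega)$ using Lemma \ref{lem3.1}, which accounts for the dependence of the final constant $C_{1}$ on $p$ and $n$.

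The first step is the key point. I would note that $\triangle_{\mathbb{R}}$ and $2\triangle$ are both second order operators annihilating constants, and that they have the same principal symbol: the symbol of either Laplacian at a point depends only on the value of the metric there (not on its derivatives), and for a K\"ahler metric one has $\triangle_{\mathbb{R}}=2\triangle$ identically, so $\triangle_{\mathbb{R}}-2\triangle$ must have vanishing principal symbol for every Hermitian metric. Hence $\triangle_{\mathbb{R}}f-2\triangle f=W(f)$ for a smooth real vector field $W=W(M,\omega)$, which I would exhibit by a local computation in a holomorphic chart, writing $\partial_{x^{i}}=\partial_{i}+\partial_{\bar i}$ and $\partial_{y^{i}}=\sqrt{-1}(\partial_{i}-\partial_{\bar i})$ and using the defining relations $g_{\mathbb{R}}(\partial_{x^{i}},\partial_{x^{j}})=g_{\mathbb{R}}(\partial_{y^{i}},\partial_{y^{j}})=2\Re(g_{i\bar j})$, $g_{\mathbb{R}}(\partial_{x^{i}},\partial_{y^{j}})=2\Im(g_{i\bar j})$; the field $W$ collects the derivatives of the coefficients of $g_{\mathbb{R}}$, i.e. the torsion of the Chern connection, and vanishes when $\omega$ is K\"ahler. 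Carrying out this torsion bookkeeping is the main obstacle, although it is only a finite computation. It yields the pointwise bound $|\triangle_{\mathbb{R}}\phi|\leq 2|\triangle\phi|+C_{0}|\nabla_{\mathbb{R}}\phi|$ with $C_{0}=C_{0}(M,\omega)$, hence, after taking $L^{p}$ norms and applying Lemma \ref{lem3.1}, $\|\triangle_{\mathbb{R}}\phi\|_{L^{p}(M,\omega)}\leq C\|\triangle\phi\|_{L^{p}(M,\omega)}+C\|\nabla_{\mathbb{R}}\phi\|_{L^{p}(M,\omega)}$.

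To finish I would absorb the gradient term. By the interior $L^{p}$ estimate (see \cite{GT}) applied in finitely many coordinate charts covering $M$ and summed via a partition of unity, $\|\nabla^{2}_{\mathbb{R}}\phi\|_{L^{p}(M,g_{\mathbb{R}})}\leq C(p)\bigl(\|\triangle_{\mathbb{R}}\phi\|_{L^{p}(M,g_{\mathbb{R}})}+\|\phi\|_{L^{p}(M,g_{\mathbb{R}})}\bigr)$ for $1<p<\infty$; combining with Corollary \ref{coro3.4} gives, for any $\epsilon>0$, $\|\nabla_{\mathbb{R}}\phi\|_{L^{p}}\leq \epsilon C(p)\|\triangle_{\mathbb{R}}\phi\|_{L^{p}}+C(\epsilon,p)\|\phi\|_{L^{p}}$ (norms over $(M,g_{\mathbb{R}})$). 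Plugging this into the inequality of the previous paragraph and choosing $\epsilon$ small enough that the $\|\triangle_{\mathbb{R}}\phi\|_{L^{p}}$ term can be absorbed on the left, I obtain $\|\triangle_{\mathbb{R}}\phi\|_{L^{p}(M,\omega)}\leq C_{1}(p)\|\triangle\phi\|_{L^{p}(M,\omega)}+C(p)\|\phi\|_{L^{p}(M,\omega)}$. Since $\|\phi\|_{L^{p}(M,\omega)}\leq\|\phi\|_{L^{\infty}(M,\omega)}\,\Vol(M,\omega)^{1/p}$, which by Theorem \ref{thm2.1} is bounded by $C(\|F\|_{L^{\infty}(M,\omega)},M,\omega)$, this is precisely the asserted estimate with $C_{1}=C_{1}(p,n)$ and $C_{2}(p)=C_{2}(p,\|F\|_{L^{\infty}(M,\omega)},M,\omega)$.
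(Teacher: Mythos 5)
Your proposal is correct and follows essentially the same route as the paper: bound $\triangle_{\mathbb{R}}\phi$ by $2\triangle\phi$ plus a first-order torsion term (the paper simply cites Lemma 3.2 of \cite{To} for this relation rather than recomputing it), absorb the gradient via Corollary \ref{coro3.4} together with the $L^{p}$ elliptic estimate, and control the remaining $\|\phi\|_{L^{p}}$ by Theorem \ref{thm2.1}, passing between the $g_{\mathbb{R}}$- and $\omega$-norms with Lemma \ref{lem3.1}. No substantive difference; if anything, you make explicit the Calder\'on--Zygmund step that the paper's appeal to Corollary \ref{coro3.4} leaves implicit.
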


\begin{proof}
By some calculations, we have
\begin{equation}\label{3.3}
\|\triangle_{\mathbb{R}}\phi\|_{L^{p}(M,g_{\mathbb{R}})}\leq2\|\triangle\phi\|_{L^{p}(M,g_{\mathbb{R}})}+C_{3}(p)\|\nabla_{\mathbb{R}}\phi\|_{L^{p}(M,g_{\mathbb{R}})},
\end{equation}
where $C_{3}=C_{3}(p,M,\omega)$. For (\ref{3.3}), one can find more
details in \cite{To} (Lemma 3.2 in \cite{To} shows the exact
relation between $\triangle_{\mathbb{R}}$ and $2\triangle$). By
Corollary \ref{coro3.4}, we obtain
\begin{equation}\label{3.4}
C_{3}(p)\|\nabla_{\mathbb{R}}\phi\|_{L^{p}(M,g_{\mathbb{R}})}\leq\frac{1}{2}\|\triangle_{\mathbb{R}}\phi\|_{L^{p}(M,g_{\mathbb{R}})}+C_{4}(p)\|\phi\|_{L^{p}(M,g_{\mathbb{R}})},
\end{equation}
where $C_{4}=C_{4}(p,M,\omega)$. Combining with (\ref{3.3}) and
(\ref{3.4}), we obtain
\begin{equation*}
\|\triangle_{\mathbb{R}}\phi\|_{L^{p}(M,g_{\mathbb{R}})}\leq4\|\triangle\phi\|_{L^{p}(M,g_{\mathbb{R}})}+C_{5}(p)\|\phi\|_{L^{p}(M,g_{\mathbb{R}})},
\end{equation*}
where $C_{5}=C_{5}(p,M,\omega)$. By Theorem \ref{thm2.1} and Lemma
\ref{lem3.1}, we complete the proof.
\end{proof}

\begin{lemma}\label{lem3.6}
Under the assumptions of Theorem \ref{mainthm2}, for any $1<p<2n$,
we have
\begin{equation*}
\|\nabla\phi\|_{L^{\frac{2np}{2n-p}}(M,\omega)}\leq
C(p)\|u\|_{L^{p}(M,\omega)}+C(p),
\end{equation*}
where $u$ is defined by (\ref{u1}) and
$C(p)=C(p,\|F\|_{L^{\infty}(M,\omega)},M,\omega)$.
\end{lemma}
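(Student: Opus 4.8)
The plan is to pass to the underlying Riemannian manifold $(M,g_{\mathbb{R}})$ of real dimension $2n$, where the classical Sobolev embedding and the Calder\'on--Zygmund $L^{p}$ estimate are available, and then translate back using Lemma \ref{lem3.1}. Since $1<p<2n$, the exponent $\frac{2np}{2n-p}$ is exactly the Sobolev conjugate of $p$ in real dimension $2n$, so $W^{2,p}(M,g_{\mathbb{R}})\hookrightarrow W^{1,\frac{2np}{2n-p}}(M,g_{\mathbb{R}})$ and in particular $\|\nabla_{\mathbb{R}}\phi\|_{L^{\frac{2np}{2n-p}}(M,g_{\mathbb{R}})}\leq C(p)\|\phi\|_{W^{2,p}(M,g_{\mathbb{R}})}$. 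By Lemma \ref{lem3.1} both sides can be compared to the corresponding $(M,\omega)$-norms, so it suffices to bound $\|\phi\|_{W^{2,p}(M,g_{\mathbb{R}})}$ by $C(p)\|u\|_{L^{p}(M,\omega)}+C(p)$.

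First I would control the full Hessian: on the closed manifold $(M,g_{\mathbb{R}})$, the $L^{p}$ estimate for $\triangle_{\mathbb{R}}$ (see, e.g., \cite{GT}) gives $\|\nabla^{2}_{\mathbb{R}}\phi\|_{L^{p}(M,g_{\mathbb{R}})}\leq C(p)\big(\|\triangle_{\mathbb{R}}\phi\|_{L^{p}(M,g_{\mathbb{R}})}+\|\phi\|_{L^{p}(M,g_{\mathbb{R}})}\big)$, and Corollary \ref{coro3.4} lets me absorb the remaining first-order term $\|\nabla_{\mathbb{R}}\phi\|_{L^{p}(M,g_{\mathbb{R}})}$ into $\|\nabla^{2}_{\mathbb{R}}\phi\|_{L^{p}}$ plus $C(p)\|\phi\|_{L^{p}}$. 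Lemma \ref{lem3.5} then replaces the Riemannian Laplacian by the complex one: $\|\triangle_{\mathbb{R}}\phi\|_{L^{p}(M,\omega)}\leq C_{1}(p)\|\triangle\phi\|_{L^{p}(M,\omega)}+C_{2}(p)$.

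Next I would bring in the definition of $u$. Since $\sup_{M}\phi=-1$ and $\|\phi\|_{L^{\infty}(M,\omega)}\leq C$ by Theorem \ref{thm2.1}, the factor $e^{f_{1}(\phi)}$ is bounded above and below by positive constants depending only on $\|F\|_{L^{\infty}(M,\omega)},M,\omega$; and $\tilde{\omega}>0$ forces $n+\triangle\phi>0$ pointwise. Hence $0<n+\triangle\phi\leq C\,u$, so $\|\triangle\phi\|_{L^{p}(M,\omega)}\leq\|n+\triangle\phi\|_{L^{p}(M,\omega)}+C\leq C\|u\|_{L^{p}(M,\omega)}+C$. Combining this with the previous paragraph and with $\|\phi\|_{L^{p}(M,\omega)}\leq C$ (Theorem \ref{thm2.1}), and converting between $\omega$- and $g_{\mathbb{R}}$-norms by Lemma \ref{lem3.1}, yields $\|\phi\|_{W^{2,p}(M,g_{\mathbb{R}})}\leq C(p)\|u\|_{L^{p}(M,\omega)}+C(p)$. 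Inserting this into the Sobolev embedding of the first paragraph and translating the left-hand norm back to $(M,\omega)$ finishes the argument.

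The only genuinely delicate point is the bookkeeping: assembling the chain of comparisons ($\omega\leftrightarrow g_{\mathbb{R}}$, $\triangle\leftrightarrow\triangle_{\mathbb{R}}$, $\nabla^{2}_{\mathbb{R}}$ versus $\triangle_{\mathbb{R}}$) consistently, absorbing the first-order terms with a small parameter, and checking that the constants split as claimed into a $p$-dependent part and a part depending only on $\|F\|_{L^{\infty}(M,\omega)},M,\omega$. Everything stays valid precisely on the range $1<p<2n$, where the Sobolev conjugate exponent $\frac{2np}{2n-p}$ is finite and positive and the $L^{p}$ elliptic estimate applies.
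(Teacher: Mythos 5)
Your proposal is correct and follows essentially the same route as the paper: Sobolev embedding $W^{2,p}(M,g_{\mathbb{R}})\hookrightarrow W^{1,\frac{2np}{2n-p}}(M,g_{\mathbb{R}})$, absorption of the first-order term via Corollary \ref{coro3.4}, the Calder\'on--Zygmund $L^{p}$ estimate to control $\nabla^{2}_{\mathbb{R}}\phi$ by $\triangle_{\mathbb{R}}\phi$, Lemma \ref{lem3.5} and Lemma \ref{lem3.1} to pass to $\triangle\phi$ and to $(M,\omega)$-norms, and finally Theorem \ref{thm2.1} together with the two-sided bound on $e^{f_{1}(\phi)}$ to compare $n+\triangle\phi$ with $u$. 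The bookkeeping you flag is handled exactly as you describe, so no changes are needed.
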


\begin{proof} By Sobolev embedding
$W^{2,p}(M,g_{\mathbb{R}})\hookrightarrow
W^{1,\frac{2np}{2n-p}}(M,g_{\mathbb{R}})$, we have
\begin{equation*}
\|\nabla_{\mathbb{R}}\phi\|_{L^{\frac{2np}{2n-p}}(M,g_{\mathbb{R}})}\leq
C_{1}(p)\|\nabla^{2}_{\mathbb{R}}\phi\|_{L^{p}(M,g_{\mathbb{R}})}+C_{1}(p)\|\nabla_{\mathbb{R}}\phi\|_{L^{p}(M,g_{\mathbb{R}})}+C_{1}(p)\|\phi\|_{L^{p}(M,g_{\mathbb{R}})},
\end{equation*}
where $C_{1}(p)=C_{1}(p,M,g_{\mathbb{R}})$. Combining with Corollary
\ref{coro3.4}, we have
\begin{equation*}
\|\nabla\phi\|_{L^{\frac{2np}{2n-p}}(M,g_{\mathbb{R}})}\leq
C_{2}(p)\|\nabla^{2}_{\mathbb{R}}\phi\|_{L^{p}(M,g_{\mathbb{R}})}+C_{2}(p)\|\phi\|_{L^{p}(M,g_{\mathbb{R}})},
\end{equation*}
where $C_{2}(p)=C_{2}(p,M,g_{\mathbb{R}})$. By Theorem \ref{thm2.1}
and $L^{p}$ estimates (for example, see \cite{GT}), we have
\begin{equation*}
\|\nabla\phi\|_{L^{\frac{2np}{2n-p}}(M,g_{\mathbb{R}})}\leq
C_{3}(p)\|\triangle_{\mathbb{R}}\phi\|_{L^{p}(M,g_{\mathbb{R}})}+C_{3}(p),
\end{equation*}
where
$C_{3}(p)=C_{3}(p,\|F\|_{L^{\infty}(M,\omega)},M,g_{\mathbb{R}})$.
By Lemma \ref{lem3.1} and Lemma \ref{lem3.5}, we have
\begin{equation*}
\|\nabla\phi\|_{L^{\frac{2np}{2n-p}}(M,\omega)}\leq
C_{4}(p)\|\triangle\phi\|_{L^{p}(M,\omega)}+C_{4}(p),
\end{equation*}
where
$C_{4}(p)=C_{4}(p,\|F\|_{L^{\infty}(M,\omega)},M,g_{\mathbb{R}})$.
By (\ref{u1}) and Theorem \ref{thm2.1}, we complete the proof.
\end{proof}

\begin{lemma}\label{lem3.7}
Let $p$, $r$ be real numbers $1<p,r<\infty$. Under the assumptions
of Theorem \ref{mainthm2}, we have
\begin{equation*}
\|\nabla\phi\|_{L^{p}(M,\omega)}\leq
C(p,r)\|u\|_{L^{r}}^{\alpha}+C(p,r),
\end{equation*}
where $C(p,r)=C(p,r,\|F\|_{L^{\infty}(M,\omega)},M,\omega)$ and
\begin{equation*}
\frac{1}{p}=\frac{1}{2n}+\alpha(\frac{1}{r}-\frac{1}{n}),
\end{equation*}
for $\alpha$ in the $\frac{1}{2}\leq\alpha<1$.
\end{lemma}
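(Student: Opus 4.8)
The plan is to read off this inequality directly from the Gagliardo--Nirenberg--Aubin interpolation inequality of Theorem \ref{thm3.3}, applied on the Riemannian manifold $(M,g_{\mathbb{R}})$ of real dimension $2n$, with the particular choice $m=2$, $j=1$ and $q=\infty$. With these choices the exponent relation in Theorem \ref{thm3.3} becomes $\frac{1}{p}=\frac{1}{2n}+\alpha(\frac{1}{r}-\frac{1}{n})$, which is exactly the relation in the statement of Lemma \ref{lem3.7}, and the admissible range $\frac{j}{m}\le\alpha\le 1$ becomes $\frac12\le\alpha\le 1$; since we only use $\alpha<1$, we never touch the exceptional case $r=\frac{2n}{m-j}=2n$, $\alpha=1$. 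The price of taking $q=\infty$ is that one must control $\|\phi\|_{L^\infty}$, which is precisely what Theorem \ref{thm2.1} provides, and this is what forces the lower bound $\alpha\ge\frac12$.

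First I would replace $\phi$ by $\tilde\phi=\phi-\frac{1}{\Vol(M,g_{\mathbb{R}})}\int_M\phi\,dV_{\mathbb{R}}$, so that $\int_M\tilde\phi\,dV_{\mathbb{R}}=0$ and Theorem \ref{thm3.3} is applicable. Since $\nabla_{\mathbb{R}}\tilde\phi=\nabla_{\mathbb{R}}\phi$ and $\nabla^2_{\mathbb{R}}\tilde\phi=\nabla^2_{\mathbb{R}}\phi$, Theorem \ref{thm3.3} gives
$$\|\nabla_{\mathbb{R}}\phi\|_{L^p(M,g_{\mathbb{R}})}\le C\,\|\nabla^2_{\mathbb{R}}\phi\|_{L^r(M,g_{\mathbb{R}})}^{\alpha}\,\|\tilde\phi\|_{L^\infty(M,g_{\mathbb{R}})}^{1-\alpha}.$$
By Theorem \ref{thm2.1}, $\|\tilde\phi\|_{L^\infty(M,g_{\mathbb{R}})}\le 2\|\phi\|_{L^\infty(M,\omega)}\le C$. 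For the Hessian factor I would use the $L^p$ estimates on the closed manifold $(M,g_{\mathbb{R}})$ (for example \cite{GT}) together with Theorem \ref{thm2.1} to get $\|\nabla^2_{\mathbb{R}}\phi\|_{L^r(M,g_{\mathbb{R}})}\le C\|\triangle_{\mathbb{R}}\phi\|_{L^r(M,g_{\mathbb{R}})}+C$ (any left-over first-order term being absorbed by Corollary \ref{coro3.4}); then Lemma \ref{lem3.5} and Lemma \ref{lem3.1} give $\|\triangle_{\mathbb{R}}\phi\|_{L^r(M,g_{\mathbb{R}})}\le C\|\triangle\phi\|_{L^r(M,\omega)}+C$; and finally, since $n+\triangle\phi=e^{-f_{1}(\phi)}u$ by (\ref{u1}) with $e^{-f_{1}(\phi)}$ bounded above and below by positive constants (Theorem \ref{thm2.1}), we obtain $\|\triangle\phi\|_{L^r(M,\omega)}\le C\|u\|_{L^r(M,\omega)}+C$. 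Chaining these inequalities yields $\|\nabla^2_{\mathbb{R}}\phi\|_{L^r(M,g_{\mathbb{R}})}\le C\|u\|_{L^r(M,\omega)}+C$.

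Combining everything and using Lemma \ref{lem3.1} once more to pass from $\|\nabla_{\mathbb{R}}\phi\|_{L^p(M,g_{\mathbb{R}})}$ back to $\|\nabla\phi\|_{L^p(M,\omega)}$, we get $\|\nabla\phi\|_{L^p(M,\omega)}\le C\bigl(\|u\|_{L^r(M,\omega)}+1\bigr)^{\alpha}$. Since $0<\alpha<1$, the elementary inequality $(a+b)^{\alpha}\le a^{\alpha}+b^{\alpha}$ for $a,b\ge 0$ gives $\|\nabla\phi\|_{L^p(M,\omega)}\le C(p,r)\|u\|_{L^r(M,\omega)}^{\alpha}+C(p,r)$, which is the claim.

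As for difficulty: there is essentially no genuine obstacle in this lemma — it is a bookkeeping argument once one recognizes that the prescribed exponent relation is precisely Theorem \ref{thm3.3} with $m=2$, $j=1$, $q=\infty$. The only points demanding a little care are the reduction to a zero-average function so that Theorem \ref{thm3.3} applies, the use of the $C^0$ bound of Theorem \ref{thm2.1} to legitimately set $q=\infty$ (which is exactly what dictates the constraint $\alpha\ge\frac12$), and the routine conversions between the $g_{\mathbb{R}}$- and $\omega$-norms (Lemma \ref{lem3.1}) and between $\triangle_{\mathbb{R}}$ and $\triangle$ (Lemma \ref{lem3.5}).
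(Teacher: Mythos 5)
Your proposal is correct and follows essentially the same route as the paper: subtract the average so Theorem \ref{thm3.3} applies, use it with $j=1$, $m=2$, $q=\infty$ (the $C^{0}$ bound of Theorem \ref{thm2.1} controlling the $L^{\infty}$ factor and forcing $\alpha\geq\tfrac12$), then chain the $L^{p}$ (Calder\'on--Zygmund) estimate, Lemma \ref{lem3.5}, Lemma \ref{lem3.1} and (\ref{u1}) to bound $\|\nabla^{2}_{\mathbb{R}}\phi\|_{L^{r}}$ by $\|u\|_{L^{r}}$. Your write-up merely makes explicit the choice $q=\infty$ and the final elementary splitting of $(\|u\|_{L^{r}}+1)^{\alpha}$, which the paper leaves implicit.
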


\begin{proof}
Define $\tilde{\phi}=\phi-\frac{1}{Vol(M,g_{R})}\int_{M}\phi
dV_{\mathbb{R}}$, then $\int_{M}\tilde{\phi}dV_{\mathbb{R}}=0$, by
Theorem \ref{thm2.1}, Lemma \ref{lem3.1} and Theorem \ref{thm3.3},
we have
\begin{equation*}
\|\nabla_{\mathbb{R}}\tilde{\phi}\|_{L^{p}(M,g_{\mathbb{R}})}\leq
C_{1}(p,r)\|\nabla^{2}_{\mathbb{R}}\tilde{\phi}\|_{L^{r}(M,g_{\mathbb{R}})}^{\alpha}
\end{equation*}
which implies
\begin{equation*}
\|\nabla_{\mathbb{R}}\phi\|_{L^{p}(M,g_{\mathbb{R}})}\leq
C_{1}(p,r)\|\nabla^{2}_{\mathbb{R}}\phi\|_{L^{r}(M,g_{\mathbb{R}})}^{\alpha},
\end{equation*}
where $C_{1}(p,r)=C_{1}(p,r,\|F\|_{L^{\infty}(M,\omega)},M,\omega)$
and $\alpha=\frac{(2n-p)r}{(2n-2r)p}$. Combining Lemma \ref{lem3.1},
Lemma \ref{lem3.5}, (\ref{u1}) and $L^{p}$ estimates (for example,
see \cite{GT}), we complete the proof.
\end{proof}

~~~~~~
\\

\noindent{Jianchun Chu}\\
School of Mathematical Sciences, Peking University\\
Yiheyuan Road 5, Beijing, 100871, China\\
Email:{chujianchun@pku.edu.cn}

\end{document}